\documentclass[11pt]{amsart}

\topmargin 0mm \evensidemargin 15mm \oddsidemargin 15mm \textwidth
140mm \textheight 230mm

\theoremstyle{plain}
\newtheorem{theorem}                {Theorem}      [section]
\newtheorem*{theorem1}                {Theorem \ref{thm:pmc_red_1}}
\newtheorem*{theorem2}                {Theorem \ref{thm:pmc_red_2}}
\newtheorem*{theorem3}                {Theorem \ref{thm:surface_red1}}
\newtheorem*{theorem4}                {Theorem \ref{thm:surface_red2}}

\newtheorem{proposition}  [theorem]  {Proposition}
\newtheorem{corollary}    [theorem]  {Corollary}
\newtheorem{lemma}        [theorem]  {Lemma}

\theoremstyle{definition}

\newtheorem{remark}         {Remark}[section]
\newtheorem{definition}   [theorem]  {Definition}

\DeclareMathOperator{\trace}{trace}

 \DeclareMathOperator{\id}{I}
 
\DeclareMathOperator{\ric}{Ric}

\DeclareMathOperator{\Span}{span}
 
\DeclareMathOperator{\cst}{constant}

\numberwithin{equation}{section}

\begin{document}

\title[Complete pmc submanifolds]
{On complete submanifolds with parallel mean curvature in product spaces}

\author{Dorel~Fetcu}
\author{Harold~Rosenberg}

\address{Department of Mathematics\\
"Gh. Asachi" Technical University of Iasi\\
Bd. Carol I no. 11 \\
700506 Iasi, Romania} \email{dfetcu@math.tuiasi.ro; dorel@impa.br}

\address{IMPA\\ Estrada Dona Castorina\\ 110, 22460-320 Rio de
Janeiro, Brasil} \email{rosen@impa.br}

\thanks{The first author was partially supported by a grant of the Romanian National Authority for
Scientific Research, CNCS -- UEFISCDI, project number
PN-II-RU-TE-2011-3-0108.}

\begin{abstract} We prove a Simons type formula for submanifolds with parallel mean curvature vector field in product spaces of type $M^n(c)\times\mathbb{R}$, where $M^n(c)$ is a space form with constant sectional curvature $c$, and then we use it to characterize some of these submanifolds.
\end{abstract}

\subjclass[2000]{53A10, 53C42}

\keywords{submanifolds with parallel mean curvature vector, Simons
type equation.}

\maketitle

\section{Introduction}

In 1968, James Simons obtained an equation for the Laplacian of the
second fundamental form of a minimal subamanifold of a Riemannian
manifold (see \cite{JS}).  He then applied this theorem in several
ways; in particular by characterizing certain minimal submanifolds
of spheres. Over the years, such formulas, nowadays called Simons
type equations, proved to be a powerful tool not only for studying
minimal submanifolds in Riemannian manifolds, but also, more
generaly, for studying submanifolds with constant mean curvature
(cmc submanifolds) or with parallel mean curvature vector (pmc
submanifolds). A special attention was paid to cmc and pmc
submanifolds in space forms, articles like
\cite{AdC,AT,CN,E,NS,WS,Y} being only a few examples of
contributions on this topic in which Simons type formulas are used
to prove gap and reduction of codimension theorems. An excellent
presentation of the classical result of Simons and some of its
applications can be found in the very recent book \cite{CM}. The
authors point out, for example, how Simons' equation can be used to
obtain curvature bounds for minimal surfaces with small total
curvature and also curvature estimates for stable minimal surfaces
in $\mathbb{R}^3$, and then, more generally, for stable minimal
hypersurfaces in $\mathbb{R}^n$.

Recently, such equations were obtained for cmc and pmc submanifolds in product spaces of type $M^n(c)\times\mathbb{R}$, where $M^n(c)$ stands for an $n$-dimensional space form with
constant sectional curvature $c$, and then used to characterize some of these submanifolds (see, for example, \cite{B,FOR}). More exactly, in \cite{B} the author computed the Laplacian of the second fundamental form of a cmc surface in $M^3(c)\times\mathbb{R}$, as well as the Laplacian of the traceless part of the Abresch-Rosenberg differential introduced in \cite{AR} for such surfaces, whilst in \cite{FOR} it was found the expression of the Laplacian of $|A_H|^2$ for a pmc submanifold in $M^n(c)\times\mathbb{R}$ with shape operator $A$ and mean curvature vector field $H$.

In our paper, we first compute the Laplacian of the second
fundamental form of a pmc submanifold in $M^n(c)\times\mathbb{R}$
and then we use this Simons type formula to prove some gap theorems
for pmc submanifolds in $M^n(c)\times\mathbb{R}$ when $c>0$ and the
mean curvature vector field $H$ of the submanifold makes a constant
angle with the unit vector field $\xi$ tangent to $\mathbb{R}$, or
when $c<0$ and $H$ is orthogonal to $\xi$.

Our main results are the following four theorems.

\begin{theorem1} Let $\Sigma^m$ be an immersed complete non-minimal pmc submanifold in
$M^n(c)\times\mathbb{R}$, $n>m\geq 3$, $c>0$, with mean curvature vector field $H$ and second fundamental form $\sigma$. If the angle between $H$ and $\xi$ is constant and
$$
|\sigma|^2+\frac{2c(2m+1)}{m}|T|^2\leq 2c+\frac{m^2}{m-1}|H|^2,
$$
where $T$ is the tangent part of $\xi$, then $\Sigma^m$ is a totally umbilical cmc hypersurface in
$M^{m+1}(c)$.
\end{theorem1}

\begin{theorem2} Let $\Sigma^m$ be an immersed complete non-minimal pmc submanifold in
$M^n(c)\times\mathbb{R}$, $n>m\geq 3$, $c<0$, with mean curvature vector field $H$ and second fundamental form $\sigma$. If $H$ is orthogonal to $\xi$  and
$$
|\sigma|^2+\frac{2c(m+1)}{m}|T|^2\leq 4c+\frac{m^2}{m-1}|H|^2,
$$
then $\Sigma^m$ is a totally umbilical cmc hypersurface in
$M^{m+1}(c)$.
\end{theorem2}

\begin{theorem3} Let $\Sigma^2$ be a complete non-minimal pmc surface in $M^n(c)\times\mathbb{R}$, $n>2$, $c>0$, such that
the angle between $H$ and $\xi$ is constant and
$$
|\sigma|^2+3c|T|^2\leq 4|H|^2+2c.
$$
Then, either
\begin{enumerate}
\item $\Sigma^2$ is pseudo-umbilical and lies in $M^n(c)$$;$ or

\item $\Sigma^2$ is a torus $\mathbb{S}^1(r)\times\mathbb{S}^1\Big(\sqrt{\frac{1}{c}-r^2}\Big)$ in $M^3(c)$, with $r^2\neq\frac{1}{2c}$.
\end{enumerate}
\end{theorem3}

\begin{theorem4} Let $\Sigma^2$ be a complete non-minimal pmc surface in $M^n(c)\times\mathbb{R}$, $n>2$, $c<0$, such that
$H$ is orthogonal to $\xi$ and
$$
|\sigma|^2+5c|T|^2\leq 4|H|^2+4c.
$$
Then $\Sigma^2$ is pseudo-umbilical and lies in $M^n(c)$.
\end{theorem4}

\noindent \textbf{Acknowledgments.} The first author would like to
thank the IMPA in Rio de Janeiro for providing a very stimulative work environment during the preparation of this paper.

\section{Preliminaries}

Let $M^n(c)$ be a space form, i.e. a simply-connected
$n$-dimensional manifold with constant sectional curvature $c$. Thus, $M^n(c)$ will be the sphere $\mathbb{S}^n(c)$, the
Euclidean space, or the hyperbolic space $\mathbb{H}^n(c)$, as
$c>0$, $c=0$, or $c<0$. Now, let us consider the product manifold
$\bar M=M^n(c)\times\mathbb{R}$. The expression of the curvature
tensor $\bar R$ of such a manifold can be obtained from
$$
\langle\bar R(X,Y)Z,W\rangle=c\{\langle d\pi Y, d\pi Z\rangle\langle d\pi X, d\pi W\rangle-\langle d\pi X, d\pi Z\rangle\langle d\pi Y, d\pi W\rangle\},
$$
where $\pi:\bar M=M^n(c)\times\mathbb{R}\rightarrow M^n(c)$ is the projection map. After a straightforward computation we get
\begin{equation}\label{eq:barR}
\begin{array}{ll}
\bar R(X,Y)Z=&c\{\langle Y, Z\rangle X-\langle X, Z\rangle Y-\langle Y,\xi\rangle\langle Z,\xi\rangle X+\langle X,\xi\rangle\langle Z,\xi\rangle Y\\ \\&+\langle X,Z\rangle\langle Y,\xi\rangle\xi-\langle Y,Z\rangle\langle X,\xi\rangle\xi\},
\end{array}
\end{equation}
where $\xi$ is the unit vector tangent to $\mathbb{R}$.

Let $\Sigma^m$ be an $m$-dimensional submanifold of $\bar M$. From
the equation of Gauss
$$
\begin{array}{ll}
\langle R(X,Y)Z,W\rangle=&\langle\bar R(X,Y)Z,W\rangle\\ \\&+\sum_{\alpha=m+1}^{n+1}\{\langle A_{\alpha}Y,Z\rangle\langle A_{\alpha}X,W\rangle-\langle A_{\alpha}X,Z\rangle\langle A_{\alpha}Y,W\rangle\},
\end{array}
$$
we obtain the expression of its curvature tensor
\begin{equation}\label{eq:R}
\begin{array}{ll}
R(X,Y)Z=&c\{\langle Y, Z\rangle X-\langle X, Z\rangle Y-\langle Y,T\rangle\langle Z,T\rangle X+\langle X,T\rangle\langle Z,T\rangle Y\\ \\&+\langle X,Z\rangle\langle Y,T\rangle T-\langle Y,Z\rangle\langle X,T\rangle T\}\\ \\&+\sum_{\alpha=m+1}^{n+1}\{\langle A_{\alpha}Y,Z\rangle A_{\alpha}X-\langle A_{\alpha}X,Z\rangle A_{\alpha}Y\},
\end{array}
\end{equation}
where $T$ is the component of $\xi$ tangent to $\Sigma^m$ and $A$ is
the shape operator defined by the equation of Weingarten
$$
\bar\nabla_XV=-A_VX+\nabla^{\perp}_XV,
$$
for any vector field $X$ tangent to $\Sigma^m$ and any normal vector
field $V$. Here $\bar\nabla$ is the Levi-Civita connection on $\bar
M$, $\nabla^{\perp}$ is the connection in the normal bundle, and
$A_{\alpha}=A_{E_{\alpha}}$, $\{E_{\alpha}\}_{\alpha=m+1}^{n+1}$
being a local orthonormal frame field in the normal bundle.

\begin{definition} A submanifold $\Sigma^m$ of $M^n(c)\times\mathbb{R}$ is called a \textit{vertical cylinder} over $\Sigma^{m-1}$ if $\Sigma^m=\pi^{-1}(\Sigma^{m-1})$, where $\pi:M^n(c)\times\mathbb{R}\rightarrow M^n(c)$ is the projection map and $\Sigma^{m-1}$ is a submanifold of $M^n(c)$.
\end{definition}

It is easy to see that vertical cylinders $\Sigma^m=\pi^{-1}(\Sigma^{m-1})$ are characterized by the fact that $\xi$ is tangent to $\Sigma^m$.

\begin{definition} If the mean curvature vector field $H$ of a submanifold $\Sigma^m$ is
parallel in the normal bundle, i.e.
$\nabla^{\perp}H=0$, then $\Sigma^m$ is called a \textit{pmc submanifold}.
\end{definition}

\begin{remark}\label{rem:cyl} It is straightforward to verify that $\Sigma^m=\pi^{-1}(\Sigma^{m-1})$
is a pmc vertical cylinder in $M^n(c)\times\mathbb{R}$ if and only
if $\Sigma^{m-1}$ is a pmc submanifold in $M^n(c)$. Moreover, the
mean curvature vector field of $\Sigma^m$ is $H=\frac{m-1}{m} H_0$,
where $H_0$ is the mean curvature vector field of $\Sigma^{m-1}$.
\end{remark}

We end this section by recalling the following three results, which
we shall use later in this paper.

\begin{lemma}[\cite{BYC}]\label{l:chen} Let $a_1,\ldots,a_m$, where $m>1$, and $b$ be real numbers such that
\begin{equation}\label{ineq_c1}
\Big(\sum_{i=1}^n a_i\Big)^2\geq (n-1)\sum_{i=1}^n a_i^2+b.
\end{equation}
Then, for all $i\neq j$, we have
\begin{equation}\label{ineq_c2}
2a_ia_j\geq\frac{b}{n-1}.
\end{equation}
Moreover, if the inequality \eqref{ineq_c1} is strict, then so are the inequalities \eqref{ineq_c2}.
\end{lemma}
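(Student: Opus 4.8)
The plan is to reduce the claim to a single pair of indices and then apply the Cauchy--Schwarz inequality to a suitably chosen collection of $n-1$ numbers. Since both the hypothesis \eqref{ineq_c1} and the desired conclusion \eqref{ineq_c2} are symmetric under permutations of $a_1,\dots,a_n$, it is enough to prove $2a_1a_2\geq b/(n-1)$.

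The first step is to regroup the sum as $\sum_{i=1}^n a_i=(a_1+a_2)+\sum_{i=3}^n a_i$, so that the left-hand side of \eqref{ineq_c1} is the square of a sum of the $n-1$ real numbers $a_1+a_2,\,a_3,\,\dots,\,a_n$. Applying Cauchy--Schwarz (equivalently, the power-mean inequality) to these $n-1$ numbers yields
$$\Big(\sum_{i=1}^n a_i\Big)^2=\Big((a_1+a_2)+\sum_{i=3}^n a_i\Big)^2\leq (n-1)\Big((a_1+a_2)^2+\sum_{i=3}^n a_i^2\Big).$$

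The second step is to feed this into the hypothesis. Combining the above with \eqref{ineq_c1} gives
$$(n-1)\Big((a_1+a_2)^2+\sum_{i=3}^n a_i^2\Big)\geq\Big(\sum_{i=1}^n a_i\Big)^2\geq (n-1)\sum_{i=1}^n a_i^2+b=(n-1)\Big(a_1^2+a_2^2+\sum_{i=3}^n a_i^2\Big)+b.$$
Cancelling the common term $(n-1)\sum_{i=3}^n a_i^2$ and expanding $(a_1+a_2)^2=a_1^2+a_2^2+2a_1a_2$ leaves $(n-1)\,(2a_1a_2)\geq b$, which is exactly \eqref{ineq_c2} for the pair $(1,2)$; by symmetry it holds for every pair $i\neq j$. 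For the strict version, observe that if \eqref{ineq_c1} is strict then the second inequality in the displayed chain is strict, and hence so is the final conclusion $2a_ia_j>b/(n-1)$.

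The computation is entirely elementary; the only point that requires any thought is choosing to group $a_1+a_2,\,a_3,\,\dots,\,a_n$ (rather than using $a_1,\dots,a_n$ directly) before invoking Cauchy--Schwarz, since this is precisely what makes the cross term $2a_1a_2$ surface with the correct coefficient $(n-1)$. I do not anticipate any genuine obstacle.
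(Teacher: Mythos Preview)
Your proof is correct and is in fact the standard argument for this algebraic lemma. Note that the paper does not supply its own proof: the statement is simply quoted from \cite{BYC} (Chen--Okumura), so there is nothing to compare against here beyond saying that your grouping $(a_1+a_2),a_3,\dots,a_n$ followed by Cauchy--Schwarz on these $n-1$ numbers is exactly the classical route.
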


\begin{lemma}[\cite{LL}]\label{lili} Let $A_1,\ldots,A_p$, where $p\geq 2$, be symmetric $m\times m$ matrices. Then
$$
\sum_{\alpha,\beta=1}^p\{N(A_{\alpha}A_{\beta}-A_{\beta}A_{\alpha})+(\trace(A_{\alpha}A_{\beta}))^2\}\leq\frac{3}{2}\Big(\sum_{\alpha=1}^pN(A_{\alpha})\Big)^2,
$$
where $N(A)=\trace (A^tA)$. Equality holds if and only if either
\begin{enumerate}
\item  $A_1=\ldots=A_p=0$$;$ or

\item only two matrices $A_{\alpha_0}$ and $A_{\beta_0}$ are different from the null $m\times m$ matrix. Moreover, in this case, $N(A_{\alpha_0})=N(A_{\beta_0})=L$ and there exists an orthogonal matrix $T$ such that
$$
T^tA_{\alpha_0}T=\sqrt{\frac{L}{2}}\left(\begin{array}{ccccc}1&0&0&\ldots&0\\0&-1&0&\ldots&0\\0&0&0&\ldots&0\\\vdots&\vdots&\vdots&\vdots&\vdots\\0&0&0&\ldots&0\end{array}\right),\quad T^tA_{\beta_0}T=\sqrt{\frac{L}{2}}\left(\begin{array}{ccccc}0&1&0&\ldots&0\\1&0&0&\ldots&0\\0&0&0&\ldots&0\\\vdots&\vdots&\vdots&\vdots&\vdots\\0&0&0&\ldots&0\end{array}\right)
$$

\end{enumerate}
\end{lemma}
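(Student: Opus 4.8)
The plan is to reduce the statement to the two--matrix case by an orthogonal change of basis. Observe first that the three quantities $\sum_\alpha N(A_\alpha)$, $\sum_{\alpha,\beta}N(A_\alpha A_\beta-A_\beta A_\alpha)$ and $\sum_{\alpha,\beta}(\trace(A_\alpha A_\beta))^2$ are all unchanged if one replaces $(A_1,\dots,A_p)$ by $(\tilde A_1,\dots,\tilde A_p)$, where $\tilde A_\gamma=\sum_\alpha O_{\gamma\alpha}A_\alpha$ and $O=(O_{\gamma\alpha})$ is an orthogonal $p\times p$ matrix: this follows at once from $\sum_\gamma O_{\gamma\alpha}O_{\gamma\alpha'}=\delta_{\alpha\alpha'}$ together with the bilinearity of the Hilbert--Schmidt inner product $\langle A,B\rangle=\trace(AB)$ on symmetric matrices and of the commutator. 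Since $\big(\trace(A_\alpha A_\beta)\big)_{\alpha\beta}$ is a symmetric positive semidefinite $p\times p$ matrix, choose $O$ diagonalizing it; after this rotation the matrices are pairwise orthogonal, $\trace(A_\alpha A_\beta)=\delta_{\alpha\beta}N(A_\alpha)$, and, writing $a_\alpha=N(A_\alpha)\ge 0$ and $S=\sum_\alpha a_\alpha$, the claimed inequality becomes
\[
\sum_{\alpha\neq\beta}N(A_\alpha A_\beta-A_\beta A_\alpha)\ \le\ \tfrac32\,S^2-\sum_\alpha a_\alpha^2 .
\]

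The basic input is the Chern--do~Carmo--Kobayashi estimate $N(AB-BA)\le 2N(A)N(B)$ for symmetric $A,B$, which I would prove by diagonalizing $A=\mathrm{diag}(\lambda_1,\dots,\lambda_m)$ and noting $N(AB-BA)=\sum_{i,j}(\lambda_i-\lambda_j)^2B_{ij}^2\le(\lambda_{\max}-\lambda_{\min})^2N(B)\le 2\big(\sum_i\lambda_i^2\big)N(B)=2N(A)N(B)$, with equality forcing $A$ to have exactly two nonzero eigenvalues, opposite in sign, and $B$ to be supported on the corresponding off--diagonal entry. For $p=2$ this already finishes the proof, since
\[
2\,N(A_1A_2-A_2A_1)+a_1^2+a_2^2\ \le\ 4a_1a_2+a_1^2+a_2^2\ =\ \tfrac32(a_1+a_2)^2-\tfrac12(a_1-a_2)^2\ \le\ \tfrac32 S^2 .
\]

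The hard part is $p\ge 3$: summing the two--matrix bound only gives $\sum_{\alpha\neq\beta}N(A_\alpha A_\beta-A_\beta A_\alpha)\le 2\big(S^2-\sum_\alpha a_\alpha^2\big)$, which overshoots the target $\tfrac32S^2-\sum_\alpha a_\alpha^2$ whenever the $a_\alpha$ are close to equal (already for three matrices of equal norm). So one must show that the commutators cannot be simultaneously close to extremal: if $N(A_1A_2-A_2A_1)$ is near $2a_1a_2$, then, after diagonalizing $A_1$, almost all the mass of $A_2$ lies in a single $2\times2$ off--diagonal block of $A_1$'s eigenbasis, and the same is then forced on $A_3$ if $N(A_1A_3-A_3A_1)$ is near--extremal, making $A_2$ and $A_3$ nearly proportional and hence $N(A_2A_3-A_3A_2)$ small. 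I would make this quantitative by diagonalizing the matrix of largest norm, expanding $N(A_1A_\beta-A_\beta A_1)=\sum_{i,j}(\lambda_i-\lambda_j)^2(A_\beta)_{ij}^2$, and controlling the remaining brackets $N(A_\beta A_\gamma-A_\gamma A_\beta)$ ($\beta,\gamma\ge 2$) by the part of each $A_\beta$ lying outside the extremal block --- equivalently, by an induction on $p$ whose inductive step removes a third nonzero matrix, redistributing its norm without decreasing the left--hand side. Isolating and proving this ``no three extremal brackets'' statement is where essentially all the work lies; it also yields the equality discussion, since running the estimates backwards forces pairwise orthogonality, each nonzero bracket to equal exactly $2a_\alpha a_\beta$ (hence the stated $2\times2$ normal forms, by the two--matrix equality case), at most two nonzero matrices, and $a_{\alpha_0}=a_{\beta_0}$ from $(a_1-a_2)^2=0$.
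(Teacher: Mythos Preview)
The paper does not give its own proof of this lemma: it is quoted verbatim from \cite{LL} and used as a black box (in the proofs of Theorems~\ref{thm:pmc_red_1}--\ref{thm:pmc_red_2} and of Proposition~4.2), so there is nothing in the paper to compare your argument against. What you have written is a partial, independent attempt at the Li--Li result.

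Your reduction step (rotating in the index space so that the Gram matrix $\big(\trace(A_\alpha A_\beta)\big)$ is diagonal) and your treatment of the case $p=2$ are clean and correct. The gap is exactly where you locate it yourself: for $p\geq 3$ you only offer a heuristic (``if two commutators are near-extremal then a third must be small'') and an outline of an induction, without an actual inequality that closes the gap between the naive bound $2\big(S^2-\sum_\alpha a_\alpha^2\big)$ and the target $\tfrac32 S^2-\sum_\alpha a_\alpha^2$. The phrase ``I would make this quantitative'' is precisely the missing proof: one needs a concrete estimate---of the type proved in \cite{LL}---showing that for any three pairwise Hilbert--Schmidt--orthogonal symmetric matrices the total commutator mass is bounded by $\tfrac32$ times the square of the total norm, and your sketch does not supply one. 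Until that step is written out, the proposal is not a proof for $p\geq 3$; the equality discussion, which you deduce by ``running the estimates backwards'', is likewise contingent on having those estimates in hand.
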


\begin{theorem}[Omori-Yau Maximum Principle, \cite{Y2}]\label{OY} If $\Sigma^m$ is a complete Riemannian
manifold with Ricci curvature bounded from below, then for any
smooth function $u\in C^2(\Sigma^m)$ with $\sup_{\Sigma^m}
u<+\infty$ there exists a sequence of points
$\{p_k\}_{k\in\mathbb{N}}\subset \Sigma^m$ satisfying
$$
\lim_{k\rightarrow\infty}u(p_k)=\sup_{\Sigma^m}u,\quad |\nabla u|(p_k)<\frac{1}{k}\quad\textnormal{and}\quad\Delta u(p_k)<\frac{1}{k}.
$$

\end{theorem}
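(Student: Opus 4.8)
Write $u^{*}=\sup_{\Sigma^{m}}u<+\infty$, fix an origin $o\in\Sigma^{m}$, and put $\rho(x)=d(o,x)$. The plan is to prove the claim that \emph{for every $\varepsilon>0$ there is a point $p\in\Sigma^{m}$ with $u(p)>u^{*}-\varepsilon$, $|\nabla u|(p)<\varepsilon$ and $\Delta u(p)<\varepsilon$}, which gives the theorem upon taking $\varepsilon=1/k$. The idea is the classical perturbation trick: subtract from $u$ a small multiple of an exhaustion function whose gradient and Laplacian are bounded, so that the perturbed function attains an honest maximum while the perturbation contributes only a controlled error to the first and second derivatives.

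The one non-elementary ingredient is this exhaustion function. Take $f=\sqrt{1+\rho^{2}}$. Completeness and Hopf--Rinow make $\rho$, hence $f$, proper, and $f\geq1$ with $|\nabla f|=\rho/\sqrt{1+\rho^{2}}<1$ wherever $\rho$ is smooth. Writing the curvature hypothesis as $\ric\geq-(m-1)\kappa^{2}$, the Laplacian comparison theorem gives $\Delta\rho\leq(m-1)\kappa\coth(\kappa\rho)$ off $\{o\}\cup\mathrm{Cut}(o)$, and since $s\coth s\leq s+1$ this produces a constant $C=C(m,\kappa)$ with
\[
\Delta f=\frac{1}{(1+\rho^{2})^{3/2}}+\frac{\rho}{\sqrt{1+\rho^{2}}}\,\Delta\rho\leq C
\qquad\text{off }\{o\}\cup\mathrm{Cut}(o).
\]
At a cut point, where $f$ is only Lipschitz, I would invoke Calabi's trick: sliding the base point a distance $\tau$ along a minimizing geodesic yields, for every $\nu>0$, a smooth upper support function $f_{\nu}\geq f$ that agrees with $f$ at the point in question and satisfies $|\nabla f_{\nu}|<1$ and $\Delta f_{\nu}\leq C+\nu$ there. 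Since this is exactly what is needed below, one may proceed as if $f$ itself were $C^{2}$ with $|\nabla f|<1$ and $\Delta f\leq C$ everywhere.

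Now fix $\varepsilon>0$ and set $u_{\varepsilon}=u-\varepsilon f$. Since $u\leq u^{*}$ and $f(x)\to+\infty$ as $\rho(x)\to+\infty$, we have $u_{\varepsilon}(x)\to-\infty$ as $\rho(x)\to+\infty$; as $u_{\varepsilon}$ is continuous and closed metric balls in $\Sigma^{m}$ are compact, $u_{\varepsilon}$ attains its maximum at some $x_{\varepsilon}\in\Sigma^{m}$. Applying the first- and second-derivative tests to the smooth upper support function $u-\varepsilon f_{\nu}\leq u_{\varepsilon}$, which also has a local maximum at $x_{\varepsilon}$, and letting $\nu\to0$, I obtain
\[
|\nabla u|(x_{\varepsilon})=\varepsilon\,|\nabla f_{\nu}|(x_{\varepsilon})<\varepsilon,\qquad \Delta u(x_{\varepsilon})\leq\varepsilon\,\Delta f_{\nu}(x_{\varepsilon})\leq C\varepsilon .
\]
It remains to check that $u(x_{\varepsilon})$ is close to $u^{*}$: for every $y\in\Sigma^{m}$,
\[
u(x_{\varepsilon})\geq u_{\varepsilon}(x_{\varepsilon})=\max_{\Sigma^{m}}u_{\varepsilon}\geq u_{\varepsilon}(y)=u(y)-\varepsilon f(y),
\]
so, given $\delta>0$, choosing $R$ with $\sup_{\overline{B}_{R}(o)}u>u^{*}-\delta/2$ and then $\varepsilon<\delta/(2\sqrt{1+R^{2}})$ forces $u(x_{\varepsilon})>u^{*}-\delta$; hence $u(x_{\varepsilon})\to u^{*}$ as $\varepsilon\to0$. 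Replacing $\varepsilon$ by $\varepsilon/\max\{1,C\}$ and then shrinking it, the point $p=x_{\varepsilon}$ meets all three requirements of the claim.

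The main obstacle is the exhaustion-function construction, which is the only place the Ricci lower bound is used (through the Laplacian comparison theorem) and which brings along the technical nuisance of the cut locus, resolved by Calabi's upper-support argument. The remaining steps --- Hopf--Rinow to secure an attained maximum, the derivative tests, and the soft comparison with suprema over large balls to recover $u(x_{\varepsilon})\to u^{*}$ --- are routine.
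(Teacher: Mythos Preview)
The paper does not prove this statement: Theorem~\ref{OY} is quoted from \cite{Y2} as a preliminary result and used later without argument, so there is no proof in the paper to compare against. Your proposal follows the standard route to the Omori--Yau principle --- perturb $u$ by a small multiple of an exhaustion function with controlled gradient and Laplacian (built from the distance function via Laplacian comparison, with Calabi's upper-support trick at the cut locus), locate the maximum of the perturbed function by Hopf--Rinow, and read off the three conditions from the first- and second-derivative tests --- and the argument is correct as written. One cosmetic point: in the final sentence you juggle two uses of $\varepsilon$ (the perturbation parameter and the target accuracy), and the phrase ``replacing $\varepsilon$ by $\varepsilon/\max\{1,C\}$ and then shrinking it'' is a bit compressed; it would read more cleanly to fix a target $\eta>0$ and then choose the perturbation parameter $\varepsilon$ small enough that $\varepsilon<\eta$, $C\varepsilon<\eta$, and $u(x_{\varepsilon})>u^{*}-\eta$ all hold.
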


\section{A Simons type formula for pmc submanifolds in $M^n(c)\times\mathbb{R}$}\label{section_S}

Let $\Sigma^m$ be an $m$-dimensional pmc submanifold in
$M^n(c)\times\mathbb{R}$ with mean curvature vector field $H$.

In the following, we shall compute the Laplacian of the squared norm
of the second fundamental form $\sigma$ of $\Sigma^m$, where
$\sigma$ is defined by the equation of Gauss
$$
\bar\nabla_XY=\nabla_XY+\sigma(X,Y)
$$
for any tangent vector fields $X$ and $Y$.

Let $\{E_{m+1},\ldots,E_{n+1}\}$ be a local orthonormal frame field
in the normal bundle. Then, normal connection forms
$s_{\alpha\beta}$ are determined by
$$
\nabla^{\perp}_XE_{\alpha}=\sum_{\beta=m+1}^{n+1}s_{\alpha\beta}(X)E_{\beta}
$$
for any vector field $X$ tangent to $\Sigma^n$ and any
$\alpha\in\{m+1,\ldots,n+1\}$. It is easy to see that
$s_{\alpha\beta}=-s_{\beta\alpha}$ and that
$$
\begin{array}{ll}
\nabla^{\perp}_XH&=\frac{1}{m}\nabla^{\perp}_X(\trace\sigma)=\frac{1}{m}\sum_{\alpha=m+1}^{n+1}\nabla^{\perp}_X((\trace A_{\alpha})E_{\alpha})
\\ \\&=\frac{1}{m}\sum_{\alpha=m+1}^{n+1}(X(\trace A_{\alpha})-\sum_{\beta=m+1}^{n+1}s_{\alpha\beta}(X)\trace A_{\beta})E_{\alpha}.
\end{array}
$$
Therefore, the mean curvature vector field $H$ is parallel if and
only if
\begin{equation}\label{eq:H}
X(\trace A_{\alpha})-\sum_{\beta=m+1}^{n+1}s_{\alpha\beta}(X)\trace A_{\beta}=0
\end{equation}
for all $\alpha$'s.

Now, from the Codazzi equation,
$$
\begin{array}{cl}
\langle \bar R(X,Y)Z,E_{\alpha}\rangle=&\langle\nabla^{\perp}_X\sigma(Y,Z),E_{\alpha}\rangle-\langle\sigma(\nabla_XY,Z),E_{\alpha}\rangle
-\langle\sigma(Y,\nabla_XZ),E_{\alpha}\rangle\\ \\&-\langle\nabla^{\perp}_Y\sigma(X,Z),E_{\alpha}\rangle+\langle\sigma(\nabla_YX,Z),E_{\alpha}\rangle
+\langle\sigma(X,\nabla_YZ),E_{\alpha}\rangle,
\end{array}
$$
we get
$$
\begin{array}{rl}
\langle \bar R(X,Y)Z,E_{\alpha}\rangle=&X(\langle A_{\alpha}Y,Z\rangle)-\langle\sigma(Y,Z),\nabla^{\perp}_XE_{\alpha}\rangle
-\langle A_{\alpha}(\nabla_XY),Z\rangle\\ \\&-\langle A_{\alpha}Y,\nabla_XZ\rangle-Y(\langle A_{\alpha}X,Z\rangle)+\langle\sigma(X,Z),\nabla^{\perp}_YE_{\alpha}\rangle
\\ \\&+\langle A_{\alpha}(\nabla_YX),Z\rangle+\langle A_{\alpha}X,\nabla_YZ\rangle
\\ \\=&\langle(\nabla_XA_{\alpha})Y-(\nabla_YA_{\alpha})X,Z\rangle\\ \\&-\langle\sum_{\beta=m+1}^{n+1}(s_{\alpha\beta}(X)A_{\beta}Y-s_{\alpha\beta}(Y)A_{\beta}X),Z\rangle.
\end{array}
$$
Therefore, using \eqref{eq:barR}, we obtain
\begin{equation}\label{eq:Codazzi}
\begin{array}{rl}
(\nabla_XA_{\alpha})Y=&(\nabla_YA_{\alpha})X+\sum_{\beta=m+1}^{n+1}(s_{\alpha\beta}(X)A_{\beta}Y-s_{\alpha\beta}(Y)A_{\beta}X)\\ \\&+c\langle E_{\alpha},N\rangle(\langle Y,T\rangle X-\langle X,T\rangle Y),
\end{array}
\end{equation}
where $N$ is the normal part of $\xi$.

Next, we have the following Weitzenb\"ock fromula
\begin{equation}\label{eq:Laplacian}
\frac{1}{2}\Delta|A_{\alpha}|^2=|\nabla A_\alpha|^2+\langle\trace\nabla^2A_{\alpha},A_{\alpha}\rangle,
\end{equation}
where we extended the metric $\langle,\rangle$ to the tensor space in the standard way.

The second term in the right hand side of \eqref{eq:Laplacian} can
be calculated by using a method introduced in \cite{NS} and developed in \cite{E}.

Let us consider
$$
C_{\alpha}(X,Y)=(\nabla^2 A_{\alpha})(X,Y)=\nabla_X(\nabla_YA_{\alpha})-\nabla_{\nabla_XY}A_{\alpha},
$$
and note that we have the following Ricci commutation
formula
\begin{equation}\label{eq:C} C_{\alpha}(X,Y)=C_{\alpha}(Y,X)+[R(X,Y),A_{\alpha}].
\end{equation}

Next, consider an orthonormal basis $\{e_i\}_{i=1}^{m}$ in
$T_p\Sigma^m$, $p\in\Sigma^m$, extend $e_i$ to vector fields $E_i$
in a neighborhood of $p$ such that $\{E_i\}$ is a geodesic frame
field around $p$, and let us denote $X=E_k$. We have
$$
(\trace\nabla^2A_{\alpha})X=\sum_{i=1}^mC_{\alpha}(E_i,E_i)X.
$$

Using equation \eqref{eq:Codazzi}, we get, at $p$,
$$
\begin{array}{rl}
C_{\alpha}(E_i,X)E_i=&\nabla_{E_i}((\nabla_{X}A_{\alpha})E_i)\\ \\=&\nabla_{E_i}((\nabla_{E_i}A_{\alpha})X)+\nabla_{E_i}(\sum_{\beta=m+1}^{n+1}(s_{\alpha\beta}(X)A_{\beta}E_i-s_{\alpha\beta}(E_i)A_{\beta}X))\\ \\&+c\nabla_{E_i}(\langle E_{\alpha},N\rangle(\langle E_i,T\rangle X-\langle X,T\rangle E_i))
\end{array}
$$
and then
\begin{equation}\label{eq:1}
\begin{array}{lcl}
C_{\alpha}(E_i,X)E_i&=&C_{\alpha}(E_i,E_i)X\\ \\&&+\nabla_{E_i}(\sum_{\beta=m+1}^{n+1}(s_{\alpha\beta}(X)A_{\beta}E_i-s_{\alpha\beta}(E_i)A_{\beta}X))\\ \\&&+c\langle\sum_{\beta=m+1}^{n+1}s_{\alpha\beta}(E_i)E_\beta,N\rangle(\langle E_i,T\rangle X-\langle X,T\rangle E_i)\\ \\&&-c\langle A_{\alpha}E_i,T\rangle(\langle E_i,T\rangle X-\langle X,T\rangle E_i)\\ \\&&+c\langle E_{\alpha},N\rangle(\langle A_NE_i,E_i\rangle X-\langle A_NX,E_i\rangle E_i),
\end{array}
\end{equation}
where we used $\sigma(E_i,T)=-\nabla^{\perp}_{E_i}N$ and
$\nabla_{E_i}T=A_NE_i$, which follow from the fact that $\xi$ is
parallel, i.e. $\bar\nabla\xi=0$.

We also have, at $p$,
\begin{equation}\label{eq:2}
C_{\alpha}(X,E_i)E_i=\nabla_{X}((\nabla_{E_i}A_{\alpha})E_i),
\end{equation}
and then, from \eqref{eq:C}, \eqref{eq:1} and \eqref{eq:2}, we get
$$
\begin{array}{lcl}
C_{\alpha}(E_i,E_i)X&=&\nabla_{X}((\nabla_{E_i}A_{\alpha})E_i)+[R(E_i,X),A_{\alpha}]E_i\\ \\&&-\nabla_{E_i}(\sum_{\beta=m+1}^{n+1}(s_{\alpha\beta}(X)A_{\beta}E_i-s_{\alpha\beta}(E_i)A_{\beta}X))\\ \\&&-c\langle\sum_{\beta=m+1}^{n+1}s_{\alpha\beta}(E_i)E_\beta,N\rangle(\langle E_i,T\rangle X-\langle X,T\rangle E_i)\\ \\&&+c\langle A_{\alpha}E_i,T\rangle(\langle E_i,T\rangle X-\langle X,T\rangle E_i)\\ \\&&-c\langle E_{\alpha},N\rangle(\langle A_NE_i,E_i\rangle X-\langle A_NX,E_i\rangle E_i).
\end{array}
$$
Since $\nabla_{E_i}A_{\alpha}$ is symmetric, from \eqref{eq:Codazzi}
one obtains
\begin{equation}\label{eq:sum_nabla}
\begin{array}{lcl}
\langle\sum_{i=1}^m(\nabla_{E_i}A_{\alpha})E_i,Z\rangle&=&\sum_{i=1}^m\langle E_i,(\nabla_{E_i}A_{\alpha})Z\rangle=\sum_{i=1}^m\langle E_i,(\nabla_{Z}A_{\alpha})E_i\rangle\\ \\&&-\sum_{i=1}^m\langle\sum_{\beta=m+1}^{n+1}s_{\alpha\beta}(Z) A_{\beta}E_i,E_i\rangle\\ \\&&+\sum_{i=1}^m\langle\sum_{\beta=m+1}^{n+1}s_{\alpha\beta}(E_i)A_{\beta}E_i,Z\rangle \\ \\&&+c\langle E_{\alpha},N\rangle\sum_{i=1}^m\langle E_i,\langle Z,T\rangle E_i-\langle E_i,T\rangle Z\rangle,
\end{array}
\end{equation}
which, together with \eqref{eq:H}, leads to
\begin{equation}\label{eq:sum}
\begin{array}{rl}
\langle\sum_{i=1}^m(\nabla_{E_i}A_{\alpha})E_i,Z\rangle=&
Z(\trace A_{\alpha})-\sum_{\beta=m+1}^{n+1}s_{\alpha\beta}(Z)\trace A_{\beta}\\ \\&+\langle\sum_{i=1}^m\sum_{\beta=m+1}^{n+1}s_{\alpha\beta}(E_i)A_{\beta}E_i+c(m-1)\langle E_{\alpha},N\rangle T,Z\rangle\\ \\=&\langle\sum_{i=1}^m\sum_{\beta=m+1}^{n+1}s_{\alpha\beta}(E_i)A_{\beta}E_i+c(m-1)\langle E_{\alpha},N\rangle T,Z\rangle,
\end{array}
\end{equation}
for any vector $Z$ tangent to $\Sigma^m$.

Therefore, we have
\begin{equation}\label{eq:trace}
\begin{array}{lcl}
(\trace\nabla^2A_{\alpha})X&=&\sum_{i=1}^m C_{\alpha}(E_i,E_i)X\\ \\&=&\sum_{i=1}^m\sum_{\beta=m+1}^{n+1}\{X(s_{\alpha\beta}(E_i))A_{\beta}E_i+s_{\alpha\beta}(E_i)\nabla_XA_{\beta}E_i\\ \\&&-E_i(s_{\alpha\beta}(X))A_{\beta}E_i-s_{\alpha\beta}(X)\nabla_{E_i}A_{\beta}E_i \\ \\&&+E_i(s_{\alpha\beta}(E_i))A_{\beta}X+s_{\alpha\beta}(E_i)\nabla_{E_i}A_{\beta}X\}\\ \\&&+c(m-1)\langle\sum_{\beta=m+1}^{n+1}s_{\alpha\beta}(X)E_{\beta},N\rangle T\\ \\&&-c\sum_{i=1}^m\langle\sum_{\beta=m+1}^{n+1}s_{\alpha\beta}(E_i)E_\beta,N\rangle(\langle E_i,T\rangle X-\langle X,T\rangle E_i)\\ \\&&+c\langle A_{\alpha}T,T\rangle X-c\langle X,T\rangle A_{\alpha}T-cm\langle E_{\alpha}, N\rangle\langle H,N\rangle X\\ \\&&+cm\langle E_{\alpha},N\rangle A_NX-c(m-1)\langle A_{\alpha}T,X\rangle T\\ \\&&+\sum_{i=1}^m[R(E_i,X),A_{\alpha}]E_i.
\end{array}
\end{equation}

Now, using the Ricci equation
$$
\langle R^{\perp}(X,Y)E_{\alpha},E_{\beta}\rangle=\langle[A_{\alpha},A_{\beta}]X,Y\rangle+\langle\bar R(X,Y)E_{\alpha},E_{\beta}\rangle,
$$
we get, after a straightforward computation,
\begin{equation}\label{eq:a}
\begin{array}{l}
\sum_{i=1}^m\sum_{\beta=m+1}^{n+1}(X(s_{\alpha\beta}(E_i))A_{\beta}E_i-E_i(s_{\alpha\beta}(X))A_{\beta}E_i)\\ \\=\sum_{i=1}^m\sum_{\beta=m+1}^{n+1}((\nabla_Xs_{\alpha\beta})(E_i)A_{\beta}E_i-(\nabla_{E_i}s_{\alpha\beta})(X)A_{\beta}E_i)\\ \\
=\sum_{\beta=m+1}^{n+1}A_{\beta}[A_{\alpha},A_{\beta}]X-\sum_{i=1}^m\sum_{\beta,\gamma=m+1}^{n+1}s_{\alpha\gamma}(E_i)s_{\gamma\beta}(X)A_{\beta}E_i\\ \\ \ \ \ +\sum_{i=1}^m\sum_{\beta,\gamma=m+1}^{n+1}s_{\alpha\gamma}(X)s_{\gamma\beta}(E_i)A_{\beta}E_i.
\end{array}
\end{equation}

From \eqref{eq:Codazzi}, we have
\begin{equation}\label{eq:b}
\begin{array}{lcl}
\sum_{i=1}^m\sum_{\beta=m+1}^{n+1}s_{\alpha\beta}(E_i)\nabla_XA_{\beta}E_i&=&\sum_{i=1}^m\sum_{\beta=m+1}^{n+1}s_{\alpha\beta}(E_i)(\nabla_XA_{\beta})E_i\quad
\\ \\
&=&\sum_{i=1}^m\sum_{\beta=m+1}^{n+1}s_{\alpha\beta}(E_i)\{(\nabla_{E_i}A_{\beta})X
\\ \\&&-c\langle E_{\beta},N\rangle(\langle X,T\rangle E_i-\langle E_i,T\rangle X)
\\ \\&&-\sum_{\gamma=m+1}^{n+1}(s_{\beta\gamma}(E_i)A_{\gamma}X-s_{\beta\gamma}(X)A_{\gamma}E_i)\}.
\end{array}
\end{equation}

We use now \eqref{eq:sum} to compute
\begin{equation}\label{eq:c}
\begin{array}{lcl}
\sum_{i=1}^m\sum_{\beta=m+1}^{n+1}s_{\alpha\beta}(X)\nabla_{E_i}A_{\beta}E_i&=&\sum_{i=1}^m\sum_{\beta=m+1}^{n+1}s_{\alpha\beta}(X)(\nabla_{E_i}A_{\beta})E_i
\\ \\&=&\sum_{i=1}^m\sum_{\beta,\gamma=m+1}^{n+1}s_{\alpha\beta}(X)s_{\beta\gamma}(E_i)A_{\gamma}E_i
\\ \\&& +c(m-1)\langle\sum_{\beta=m+1}^{n+1}s_{\alpha\beta}(X)E_{\beta},N\rangle T.
\end{array}
\end{equation}

From the Gauss equation \eqref{eq:R} of $\Sigma^m$, we get
\begin{equation}\label{eq:d}
\begin{array}{lcl}
\sum_{i=1}^mR(E_i,X)A_{\alpha}E_i&=&c\{A_{\alpha}X-(\trace A_{\alpha})X+(\trace A_{\alpha})\langle X,T\rangle
T\\ \\&&-\langle A_{\alpha}X,T\rangle T
-\langle X,T\rangle A_{\alpha}T+\langle A_{\alpha}T,T\rangle X\}
\\ \\&&+\sum_{\beta=m+1}^{n+1}\{A_{\beta}A_{\alpha}A_{\beta} X-(\trace(A_{\alpha}A_{\beta}))A_{\beta}X\},
\end{array}
\end{equation}
and
\begin{equation}\label{eq:e}
\begin{array}{lcl}
\sum_{i=1}^m A_{\alpha}R(E_i,X)E_i&=&-c\{(m-1-|T|^2)A_{\alpha}X-(m-2)\langle X,T\rangle A_{\alpha}T\}\\ \\
&&+\sum_{\beta=m+1}^{n+1}\{A_{\alpha}A_{\beta}^2X-(\trace A_{\beta})A_{\alpha}A_{\beta}X\}.
\end{array}
\end{equation}

Finally, taking into account that
$$
E_i(s_{\alpha\beta}(E_i))A_{\beta}X=(\nabla_{E_i}s_{\alpha\beta})(E_i)A_{\beta}X
$$
and then replacing \eqref{eq:a}, \eqref{eq:b}, \eqref{eq:c},
\eqref{eq:d} and \eqref{eq:e} in \eqref{eq:trace}, we obtain, after
a long but straightforward computation,
\begin{equation}\label{quarter}
\begin{array}{lcl}
\langle\trace\nabla^2A_{\alpha},A_{\alpha}\rangle&=&\sum_{i=1}^m\langle(\trace\nabla^2A_{\alpha})E_i,A_{\alpha}E_i\rangle\\ \\&=&\sum_{i=1}^m\{\sum_{\beta=m+1}^{n+1}2s_{\alpha\beta}(E_i)\trace((\nabla_{E_i}A_{\beta})A_{\alpha})\\ \\&&-\sum_{\beta,\gamma=m+1}^{n+1}s_{\alpha\gamma}(E_i)s_{\gamma\beta}(E_i)\trace(A_{\alpha}A_{\beta})\\ \\&&+\sum_{\beta=m+1}^{n+1}(\nabla_{E_i}s_{\alpha\beta})(E_i)\trace(A_{\alpha}A_{\beta})\} \\ \\&&+c\{(m-|T|^2)|A_{\alpha}|^2-2m|A_{\alpha}T|^2+3(\trace A_{\alpha})\langle A_{\alpha}T,T\rangle\\ \\&&+m(\trace(A_NA_{\alpha}))\langle E_{\alpha},N\rangle-(\trace A_{\alpha})^2\\ \\&&-m(\trace A_{\alpha})\langle H,N\rangle\langle E_{\alpha},N\rangle\}\\ \\&&+\sum_{\beta=m+1}^{n+1}\{(\trace A_{\beta})(\trace(A_{\alpha}^2A_{\beta}))+\trace[A_{\alpha},A_{\beta}]^2\\ \\&&-(\trace(A_{\alpha}A_{\beta}))^2\}.
\end{array}
\end{equation}

From equation \eqref{eq:Laplacian}, we know that
\begin{equation}\label{eq:semifinal}
\frac{1}{2}\Delta|\sigma|^2=\frac{1}{2}\sum_{\alpha=m+1}^{n+1}\Delta|A_{\alpha}|^2=\sum_{\alpha=m+1}^{n+1}\{|\nabla A_{\alpha}|^2+\langle\trace\nabla^2A_{\alpha},A_{\alpha}\rangle\},
\end{equation}
and, in order to estimate this Laplacian, we first note that
$$
\sum_{\alpha=m+1}^{n+1}(\trace A_{\alpha})\langle A_{\alpha}T,T\rangle=m\langle\sigma(T,T),H\rangle,
\sum_{\alpha=m+1}^{n+1}(\trace(A_NA_{\alpha}))\langle E_{\alpha},N\rangle=|A_N|^2,
$$
$$
\sum_{\alpha=m+1}^{n+1}(\trace A_{\alpha})\langle H,N\rangle\langle E_{\alpha},N\rangle=m\langle H,N\rangle^2,\quad \sum_{\alpha=m+1}^{n+1}(\trace A_{\alpha})^2=m^2|H|^2,
$$
and, since $s_{\alpha\beta}=-s_{\beta\alpha}$, that
$$
\sum_{\alpha,\beta=m+1}^{n+1}(\nabla_{E_i}s_{\alpha\beta})(E_i)\trace(A_{\alpha}A_{\beta})=0.
$$

Next, we easily get
$$
\begin{array}{lcl}
(\nabla^{\perp}\sigma)(X,Y,Z)&=&\nabla^{\perp}_X\sigma(Y,Z)-\sigma(\nabla_XY,Z)-\sigma(Y,\nabla_XZ)\\ \\
&=&\sum_{\alpha=m+1}^{n+1}\langle(\nabla_XA_{\alpha})Y-\sum_{\beta=m+1}^{n+1}s_{\alpha\beta}(X)A_{\beta}Y,Z\rangle E_{\alpha}
\end{array}
$$
for all tangent vector fields $X$, $Y$ and $Z$, and then
$$
\begin{array}{lcl}
|\nabla^{\perp}\sigma|^2&=&\sum_{i,j,k=1}^m|(\nabla^{\perp}\sigma)(E_i,E_j,E_k)|^2\\ \\
&=&\sum_{\alpha=m+1}^{n+1}\sum_{i,j=1}^m\langle (\nabla_{E_i}A_{\alpha})E_j-\sum_{\beta=m+1}^{n+1}s_{\alpha\beta}(E_i)A_{\beta}E_j,\\ \\
&& \ \ \ \ \ \ \ \ \ \ \ \ \ \ \ \ \ \ \ \ \ \ \ (\nabla_{E_i}A_{\alpha})E_j-\sum_{\gamma=m+1}^{n+1}s_{\alpha\gamma}(E_i)A_{\gamma}E_j\rangle,
\end{array}
$$
which means that
$$
\begin{array}{lcl}
|\nabla^{\perp}\sigma|^2
&=&\sum_{\alpha={m+1}}^{n+1}\{|\nabla A_{\alpha}|^2+\sum_{i=1}^m(\sum_{\beta=m+1}^{n+1}2s_{\alpha\beta}(E_i)\trace((\nabla_{E_i}A_{\beta})A_{\alpha})\\ \\&&-\sum_{\beta,\gamma=m+1}^{n+1}s_{\alpha\gamma}(E_i)s_{\gamma\beta}(E_i)\trace(A_{\alpha}A_{\beta}))\}.
\end{array}
$$

Using \eqref{quarter} and \eqref{eq:semifinal}, we can state the following proposition.

\begin{proposition}\label{p:delta} Let $\Sigma^m$ be a pmc submanifold of $M^n(c)\times\mathbb{R}$, with mean curvature vector field $H$, shape operator $A$, and second fundamental form $\sigma$. Then we have
$$
\begin{array}{lcl}
\frac{1}{2}\Delta|\sigma|^2&=&|\nabla^{\perp}\sigma|^2+c\{(m-|T|^2)|\sigma|^2-2m\sum_{\alpha=m+1}^{n+1}|A_{\alpha} T|^2\\ \\&&+3m\langle \sigma(T,T),H\rangle+m|A_N|^2-m^2\langle H,N\rangle^2-m^2|H|^2\}\\ \\&&+\sum_{\alpha,\beta=m+1}^{n+1}\{(\trace A_{\beta})(\trace(A_{\alpha}^2A_{\beta}))+\trace[A_{\alpha},A_{\beta}]^2\\ \\&&-(\trace(A_{\alpha}A_{\beta}))^2\},
\end{array}
$$
where $\{E_{\alpha}\}_{\alpha=m+1}^{n+1}$ is a local orthonormal frame field in the normal bundle.
\end{proposition}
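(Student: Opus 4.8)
The plan is to obtain the formula by summing the Weitzenb\"ock identity \eqref{eq:Laplacian} over $\alpha$ and inserting the expression \eqref{quarter} already derived for $\langle\trace\nabla^2A_{\alpha},A_{\alpha}\rangle$. Starting from \eqref{eq:semifinal}, i.e. $\frac12\Delta|\sigma|^2=\sum_{\alpha=m+1}^{n+1}\{|\nabla A_{\alpha}|^2+\langle\trace\nabla^2A_{\alpha},A_{\alpha}\rangle\}$, I would substitute \eqref{quarter} term by term and organize the outcome into three blocks: a gradient block, a curvature block of order $c$, and a quartic block in the shape operators $A_{\alpha}$.

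For the gradient block, the point is that the two connection-form contributions appearing in \eqref{quarter}, namely $\sum_{i}\sum_{\beta}2s_{\alpha\beta}(E_i)\trace((\nabla_{E_i}A_{\beta})A_{\alpha})$ and $-\sum_{i}\sum_{\beta,\gamma}s_{\alpha\gamma}(E_i)s_{\gamma\beta}(E_i)\trace(A_{\alpha}A_{\beta})$, are exactly the extra terms that show up in the expansion of $|\nabla^{\perp}\sigma|^2$ computed just before the proposition; hence $\sum_{\alpha}|\nabla A_{\alpha}|^2$ together with these terms equals $|\nabla^{\perp}\sigma|^2$. The remaining connection-form term $\sum_{\alpha,\beta}(\nabla_{E_i}s_{\alpha\beta})(E_i)\trace(A_{\alpha}A_{\beta})$ vanishes by the antisymmetry $s_{\alpha\beta}=-s_{\beta\alpha}$. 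For the curvature block I would use $\sum_{\alpha}|A_{\alpha}|^2=|\sigma|^2$ together with the four scalar identities recorded above the statement, $\sum_{\alpha}(\trace A_{\alpha})\langle A_{\alpha}T,T\rangle=m\langle\sigma(T,T),H\rangle$, $\sum_{\alpha}\trace(A_NA_{\alpha})\langle E_{\alpha},N\rangle=|A_N|^2$, $\sum_{\alpha}(\trace A_{\alpha})\langle H,N\rangle\langle E_{\alpha},N\rangle=m\langle H,N\rangle^2$ and $\sum_{\alpha}(\trace A_{\alpha})^2=m^2|H|^2$, so that $c$ times the bracket in \eqref{quarter} becomes precisely $c\{(m-|T|^2)|\sigma|^2-2m\sum_{\alpha}|A_{\alpha}T|^2+3m\langle\sigma(T,T),H\rangle+m|A_N|^2-m^2\langle H,N\rangle^2-m^2|H|^2\}$. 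Finally the quartic terms $(\trace A_{\beta})(\trace(A_{\alpha}^2A_{\beta}))+\trace[A_{\alpha},A_{\beta}]^2-(\trace(A_{\alpha}A_{\beta}))^2$ in \eqref{quarter} are already written with the double sum over $\alpha,\beta$ and carry over to the statement without change.

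The main obstacle is purely organizational: one must carefully match the $s_{\alpha\beta}$-dependent terms produced by \eqref{quarter} against the expansion of $|\nabla^{\perp}\sigma|^2$, so that the frame-dependent quantity $\sum_{\alpha}|\nabla A_{\alpha}|^2$ gets replaced by the intrinsic $|\nabla^{\perp}\sigma|^2$; once this identification is in place, the rest of the argument is a direct substitution of the listed scalar identities, and collecting terms yields the claimed expression for $\frac12\Delta|\sigma|^2$. $\square$
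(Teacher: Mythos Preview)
Your proposal is correct and follows exactly the paper's own argument: the proof of the proposition is the computation leading from \eqref{eq:semifinal} and \eqref{quarter} to the stated formula, using precisely the identification of the $s_{\alpha\beta}$-terms with the extra pieces in the expansion of $|\nabla^{\perp}\sigma|^2$, the vanishing of $\sum_{\alpha,\beta}(\nabla_{E_i}s_{\alpha\beta})(E_i)\trace(A_{\alpha}A_{\beta})$ by antisymmetry, and the four scalar identities you list. There is nothing to add.
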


\begin{corollary}\label{coro_min} If $\Sigma^m$ is a minimal submanifold of $M^n(c)\times\mathbb{R}$, then we have
$$
\begin{array}{lcl}
\frac{1}{2}\Delta|\sigma|^2&=&|\nabla^{\perp}\sigma|^2+c\{(m-|T|^2)|\sigma|^2-2m\sum_{\alpha=m+1}^{n+1}|A_{\alpha} T|^2+m|A_N|^2\}\\ \\&&+\sum_{\alpha,\beta=m+1}^{n+1}\{\trace[A_{\alpha},A_{\beta}]^2-(\trace(A_{\alpha}A_{\beta}))^2\}.
\end{array}
$$
\end{corollary}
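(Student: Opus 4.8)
The plan is to derive this identity as an immediate specialization of Proposition~\ref{p:delta}. The starting point is the observation that minimality, $H=0$, is equivalent to $\trace\sigma=\sum_{\alpha=m+1}^{n+1}(\trace A_{\alpha})E_{\alpha}=0$; since $\{E_{\alpha}\}_{\alpha=m+1}^{n+1}$ is an orthonormal frame field in the normal bundle, this forces $\trace A_{\alpha}=0$ for every $\alpha$.

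With this in hand, I would inspect the terms on the right-hand side of the formula in Proposition~\ref{p:delta} one at a time. The term $3m\langle\sigma(T,T),H\rangle$ vanishes since $H=0$, and the same is true of $m^2\langle H,N\rangle^2$ and $m^2|H|^2$. In the final double sum, the cubic contribution $(\trace A_{\beta})(\trace(A_{\alpha}^2A_{\beta}))$ vanishes for every pair $(\alpha,\beta)$ because $\trace A_{\beta}=0$. All remaining terms are independent of $H$: the gradient term $|\nabla^{\perp}\sigma|^2$, the curvature terms $c\{(m-|T|^2)|\sigma|^2-2m\sum_{\alpha=m+1}^{n+1}|A_{\alpha}T|^2+m|A_N|^2\}$, and $\sum_{\alpha,\beta=m+1}^{n+1}\{\trace[A_{\alpha},A_{\beta}]^2-(\trace(A_{\alpha}A_{\beta}))^2\}$. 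Collecting them yields precisely the stated identity.

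Since each step is a direct substitution, there is essentially no obstacle. The only points meriting a moment of care are, first, the implication $H=0\Rightarrow\trace A_{\alpha}=0$ for all $\alpha$, which relies on orthonormality of the frame; and second, the observation that $N$, the normal part of $\xi$, need not vanish for a minimal submanifold, so the term $m|A_N|^2$ genuinely survives (in the notation of Proposition~\ref{p:delta} it arises from $\sum_{\alpha=m+1}^{n+1}m(\trace(A_NA_{\alpha}))\langle E_{\alpha},N\rangle=m|A_N|^2$).
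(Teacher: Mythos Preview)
Your proposal is correct and matches the paper's approach exactly: the corollary is stated immediately after Proposition~\ref{p:delta} with no separate proof, since it is obtained by setting $H=0$ (hence $\trace A_{\alpha}=0$ for all $\alpha$) in that formula. One minor remark: your parenthetical about $m|A_N|^2$ is unnecessary, since this term already appears explicitly in Proposition~\ref{p:delta} and simply carries over unchanged; there is nothing to re-derive.
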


Now, let us consider $\Sigma^m$ a non-minimal pmc submanifold of
$M^n(c)\times\mathbb{R}$ and then, for any normal vector field $V$,
define $\phi_V=A_V-\frac{\trace A_V}{m}\id$, the traceless part of
$A_V$. We shall also consider $\phi$ the traceless part of $\sigma$,
given by
$$
\phi(X,Y)=\sigma(X,Y)-\langle X,Y\rangle H.
$$
It is easy to see that $\nabla^{\perp}\phi=\nabla^{\perp}\sigma$, $|\sigma|^2=|\phi|^2+m|H|^2$ and $|A_V|^2=|\phi_V|^2+\frac{(\trace A_V)^2}{m}$. It is also easy to obtain, from the Ricci equation, that if a normal vector field $V$ is parallel in the normal bundle, then $[A_V,A_U]=0$ for all normal vector fields $U$.

Let $\{E_{m+1},\ldots,E_{n+1}\}$ be a local orthonormal frame field in the normal bundle such that $E_{m+1}=\frac{H}{|H|}$. Then, we obtain the following corollary directly from Proposition \ref{p:delta}.

\begin{corollary} If $\Sigma^m$ is a non-minimal pmc submanifold of $M^n(c)\times\mathbb{R}$, then we have
$$
\begin{array}{lcl}
\frac{1}{2}\Delta|\phi|^2&=&|\nabla^{\perp}\phi|^2+(c(m-|T|^2)+m|H|^2)|\phi|^2-2cm\sum_{\alpha=m+1}^{n+1}|\phi_{\alpha} T|^2\\ \\&&-cm\langle \phi(T,T),H\rangle+cm|\phi_N|^2+m|H|\sum_{\alpha=m+1}^{n+1}\trace(\phi_{\alpha}^2\phi_{m+1})\\ \\&&+\sum_{\alpha,\beta>m+1}\trace[\phi_{\alpha},\phi_{\beta}]^2-\sum_{\alpha,\beta=m+1}^{n+1}(\trace(\phi_{\alpha}\phi_{\beta}))^2.
\end{array}
$$
\end{corollary}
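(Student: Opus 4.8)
The plan is to substitute the trace decomposition $A_\alpha=\phi_\alpha+\frac{\trace A_\alpha}{m}\,\id$ into the formula of Proposition~\ref{p:delta} and reorganize. I would first assemble the elementary facts already recorded, plus the observation that $|H|$ is constant (since $H$ is parallel), so $|\sigma|^2=|\phi|^2+m|H|^2$ gives $\frac12\Delta|\sigma|^2=\frac12\Delta|\phi|^2$, while $|\nabla^{\perp}\phi|^2=|\nabla^{\perp}\sigma|^2$. Because $\trace A_V=m\langle H,V\rangle$ and $E_{m+1}=H/|H|$, we get $\trace A_{m+1}=m|H|$ and $\trace A_\alpha=0$ for $\alpha>m+1$, hence $\phi_\alpha=A_\alpha$ for $\alpha>m+1$ and $A_{m+1}=\phi_{m+1}+|H|\,\id$. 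Finally, $E_{m+1}$ being parallel in the normal bundle, the Ricci equation yields $[A_{m+1},A_\alpha]=0$ for all $\alpha$, so $[\phi_{m+1},\phi_\alpha]=[A_{m+1},A_\alpha]=0$ and the commutator sum of Proposition~\ref{p:delta} collapses to $\sum_{\alpha,\beta>m+1}\trace[\phi_\alpha,\phi_\beta]^2$.

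Next I would rewrite the curvature bracket term by term. From $|A_N|^2=|\phi_N|^2+m\langle H,N\rangle^2$, the term $m|A_N|^2$ contributes $m|\phi_N|^2+m^2\langle H,N\rangle^2$, cancelling $-m^2\langle H,N\rangle^2$. Using $\langle\sigma(T,T),H\rangle=\langle\phi(T,T),H\rangle+|T|^2|H|^2$ and $A_\alpha T=\phi_\alpha T+\frac{\trace A_\alpha}{m}T$, which give $\sum_\alpha|A_\alpha T|^2=\sum_\alpha|\phi_\alpha T|^2+2\langle\phi(T,T),H\rangle+|T|^2|H|^2$, the combination $-2m\sum_\alpha|A_\alpha T|^2+3m\langle\sigma(T,T),H\rangle$ becomes $-2m\sum_\alpha|\phi_\alpha T|^2-m\langle\phi(T,T),H\rangle+m|T|^2|H|^2$. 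Combined with $(m-|T|^2)|\sigma|^2=(m-|T|^2)|\phi|^2+m^2|H|^2-m|T|^2|H|^2$ and the term $-m^2|H|^2$, all the $|H|^2$ and $|T|^2|H|^2$ contributions in the bracket cancel, leaving $c\{(m-|T|^2)|\phi|^2-2m\sum_\alpha|\phi_\alpha T|^2-m\langle\phi(T,T),H\rangle+m|\phi_N|^2\}$.

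The remaining polynomial block $\sum_{\alpha,\beta}\{(\trace A_\beta)(\trace(A_\alpha^2A_\beta))+\trace[A_\alpha,A_\beta]^2-(\trace(A_\alpha A_\beta))^2\}$ carries the rest of the $|H|$-dependent terms. Since $\trace A_\beta$ vanishes unless $\beta=m+1$, the first sum is $m|H|\sum_\alpha\trace(A_\alpha^2A_{m+1})$; expanding $A_{m+1}=\phi_{m+1}+|H|\,\id$ (and likewise inside $A_\alpha^2$ when $\alpha=m+1$) and using $\sum_\alpha|A_\alpha|^2=|\sigma|^2=|\phi|^2+m|H|^2$ rewrites it as $m|H|\sum_\alpha\trace(\phi_\alpha^2\phi_{m+1})+2m|H|^2|\phi_{m+1}|^2+m|H|^2|\phi|^2+m^2|H|^4$. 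The only entry of $\sum_{\alpha,\beta}(\trace(A_\alpha A_\beta))^2$ differing from its $\phi$-version is the $(m+1,m+1)$ one, where $\trace(A_{m+1}^2)=|\phi_{m+1}|^2+m|H|^2$, so $\sum_{\alpha,\beta}(\trace(A_\alpha A_\beta))^2=\sum_{\alpha,\beta}(\trace(\phi_\alpha\phi_\beta))^2+2m|H|^2|\phi_{m+1}|^2+m^2|H|^4$. Subtracting, the $2m|H|^2|\phi_{m+1}|^2$ and $m^2|H|^4$ terms cancel, the surviving $m|H|^2|\phi|^2$ completes the coefficient of $|\phi|^2$, and together with the commutator collapse from the first paragraph one arrives at the stated identity. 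The one genuinely delicate point is this final bookkeeping — making sure all the pure-$|H|$ and $|\phi_{m+1}|^2$ corrections cancel; everything else is a direct substitution of the trace decomposition.
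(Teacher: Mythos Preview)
Your proposal is correct and is precisely the computation the paper has in mind when it says the corollary follows ``directly from Proposition~\ref{p:delta}'': you choose the frame with $E_{m+1}=H/|H|$, substitute the decomposition $A_\alpha=\phi_\alpha+\frac{\trace A_\alpha}{m}\id$ into every term of Proposition~\ref{p:delta}, use $[A_{m+1},A_\alpha]=0$ from the Ricci equation to collapse the commutator sum, and check that the resulting $|H|$--corrections cancel. Your bookkeeping is accurate, including the delicate cancellation of the $2m|H|^2|\phi_{m+1}|^2$ and $m^2|H|^4$ terms between the cubic and quartic sums.
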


In the following, we shall compute the Laplacian of the squared norm
of the tangent part $T$ of $\xi$.

As above, let us consider an orthonormal basis $\{e_i\}_{i=1}^{m}$ in
$T_p\Sigma^m$, $p\in\Sigma^m$, and then extend $e_i$ to vector fields $E_i$
in a neighborhood of $p$ such that
$\{E_i\}$ is a geodesic frame field around $p$.
Then, at $p$, we have
$$
\begin{array}{lcl}
\frac{1}{2}\Delta|T|^2&=&\sum_{i=1}^m(\langle\nabla_{E_i}T,\nabla_{E_i}T\rangle+\langle\nabla_{E_i}\nabla_{E_i}T,T\rangle)\\ \\
&=&|A_N|^2+\sum_{i=1}^m\langle\nabla_{E_i}A_NE_i,T\rangle
\end{array}
$$
and, since $\nabla_XA_N$ is symmetric,
$$
\begin{array}{lcl}
\sum_{i=1}^m\langle\nabla_{E_i}A_NE_i,T\rangle&=&\sum_{i=1}^m\langle(\nabla_{E_i}A_N)E_i,T\rangle\\ \\
&=&\sum_{i=1}^m\langle(\nabla_{E_i}A_N)T,E_i\rangle=\sum_{i=1}^m\langle\nabla_{E_i}A_NT
-A_N\nabla_{E_i}T,E_i\rangle\\ \\&=&\sum_{i=1}^m\langle\nabla_{E_i}\nabla_TT
-\nabla_{\nabla_{E_i}T}T,E_i\rangle\\ \\&=&\sum_{i=1}^m\langle\nabla_{E_i}\nabla_TT
+\nabla_{[T,E_i]}T,E_i\rangle\\ \\&=&\sum_{i=1}^m(\langle\nabla_T\nabla_{E_i}T,E_i\rangle
-\langle R(T,E_i)T,E_i\rangle)\\ \\&=&\sum_{i=1}^m(\langle\nabla_TA_NE_i,E_i\rangle
-\langle R(T,E_i)T,E_i\rangle)\\ \\&=&T(\trace A_N)
-\sum_{i=1}^m\langle R(T,E_i)T,E_i\rangle\\ \\&=&mT(\langle H,N\rangle)
-\sum_{i=1}^m\langle R(T,E_i)T,E_i\rangle\\ \\&=&-m\langle \sigma(T,T),H\rangle)
-\sum_{i=1}^m\langle R(T,E_i)T,E_i\rangle
\end{array}
$$
where we used $\nabla_XT=A_NX$ and $\nabla^{\perp}_XN=-\sigma(X,T)$.

From the Gauss equation \eqref{eq:R}, it follows that
$$
\sum_{i=1}^m\langle R(T,E_i)T,E_i\rangle=c(1-m)|T|^2(1-|T|^2)+\sum_{\alpha=m+1}^{n+1}\{|A_{\alpha}T|^2-(\trace A_{\alpha})\langle A_{\alpha}T,T\rangle\},
$$
and then we get
$$
\begin{array}{lcl}
\frac{1}{2}\Delta|T|^2&=&|A_N|^2-m\langle\sigma(T,T),H\rangle)+c(m-1)|T|^2(1-|T|^2)
\\ \\&&-\sum_{\alpha=m+1}^{n+1}\{|A_{\alpha}T|^2-(\trace A_{\alpha})\langle A_{\alpha}T,T\rangle\},
\end{array}
$$
where $\{E_{\alpha}\}_{\alpha=m+1}^{n+1}$ is a local orthonormal
frame field in the normal bundle.

We conclude with the following proposition.

\begin{proposition}\label{pTgen} Let $\Sigma^m$ be an $m$-dimensional pmc submanifold in $M^n(c)\times\mathbb{R}$,
with shape operator $A$. Then we have
$$
\frac{1}{2}\Delta|T|^2=|A_N|^2+c(m-1)|T|^2(1-|T|^2)-\sum_{\alpha=m+1}^{n+1}|A_{\alpha}T|^2.
$$
\end{proposition}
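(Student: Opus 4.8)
The plan is to imitate the Bochner computation already carried out in the text, namely to evaluate $\frac12\Delta|T|^2$ pointwise at an arbitrary $p\in\Sigma^m$ using a frame $\{E_i\}_{i=1}^m$ that is geodesic at $p$. The two structural facts that drive everything come from $\bar\nabla\xi=0$: decomposing $\xi=T+N$ into its tangent and normal parts and splitting $\bar\nabla_X\xi$ via Gauss and Weingarten yields $\nabla_XT=A_NX$ and $\nabla^{\perp}_XN=-\sigma(X,T)$ for every tangent vector field $X$. With the geodesic frame one has, at $p$, $\frac12\Delta|T|^2=\sum_i\big(|\nabla_{E_i}T|^2+\langle\nabla_{E_i}\nabla_{E_i}T,T\rangle\big)$, and the first sum is immediately $\sum_i|A_NE_i|^2=|A_N|^2$. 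Since $\nabla_{E_i}E_i=0$ at $p$ (and hence also $\nabla_TE_i=\sum_j\langle T,E_j\rangle\nabla_{E_j}E_i=0$ there), everything reduces to computing $\sum_i\langle\nabla_{E_i}\nabla_{E_i}T,T\rangle=\sum_i\langle(\nabla_{E_i}A_N)E_i,T\rangle$ at $p$.

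The heart of the argument is a single derivative-commutation step. Because $A_N$ is a symmetric operator, so is $\nabla_{E_i}A_N$, whence $\sum_i\langle(\nabla_{E_i}A_N)E_i,T\rangle=\sum_i\langle(\nabla_{E_i}A_N)T,E_i\rangle$; and using $A_NT=\nabla_TT$ one writes $(\nabla_{E_i}A_N)T=\nabla_{E_i}\nabla_TT-\nabla_{\nabla_{E_i}T}T$. Rewriting $-\nabla_{\nabla_{E_i}T}T=\nabla_{[T,E_i]}T$ at $p$ (valid because $\nabla_TE_i=0$ there) and inserting the definition of the curvature tensor turns this into $\nabla_T\nabla_{E_i}T-R(T,E_i)T$. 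Contracting with $E_i$ and summing, the first piece gives $\sum_i\langle\nabla_T(A_NE_i),E_i\rangle=T(\trace A_N)$ — again using $\nabla_TE_i=0$ at $p$ — and since $H$ is parallel in the normal bundle and $\trace A_N=m\langle H,N\rangle$, one gets $T(\trace A_N)=m\langle H,\nabla^{\perp}_TN\rangle=-m\langle\sigma(T,T),H\rangle$. Hence $\sum_i\langle(\nabla_{E_i}A_N)E_i,T\rangle=-m\langle\sigma(T,T),H\rangle-\sum_i\langle R(T,E_i)T,E_i\rangle$.

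It then remains to evaluate $\sum_i\langle R(T,E_i)T,E_i\rangle$ from the Gauss equation \eqref{eq:R}. The space-form--times-$\mathbb{R}$ part, after contracting and using $\sum_i\langle E_i,T\rangle^2=|T|^2$, contributes $c(1-m)|T|^2(1-|T|^2)$, and the shape-operator part contributes $\sum_{\alpha=m+1}^{n+1}\{|A_{\alpha}T|^2-(\trace A_{\alpha})\langle A_{\alpha}T,T\rangle\}$. Substituting this back and using $\sum_{\alpha}(\trace A_{\alpha})\langle A_{\alpha}T,T\rangle=m\langle\sigma(T,T),H\rangle$ (which is just $H=\frac1m\sum_{\alpha}(\trace A_{\alpha})E_{\alpha}$ paired with $\sigma(T,T)$), the two occurrences of $m\langle\sigma(T,T),H\rangle$ cancel, leaving $\sum_i\langle(\nabla_{E_i}A_N)E_i,T\rangle=c(m-1)|T|^2(1-|T|^2)-\sum_{\alpha}|A_{\alpha}T|^2$; adding $|A_N|^2$ gives the claimed identity. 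The one place to be careful is precisely the commutation step: since $T$ is not a frame vector, the manipulations that make $\nabla_{E_i}A_N$ behave like a Codazzi-type tensor rely on the symmetry of $A_N$ together with $\nabla_TE_i=0$ at $p$, and must not be confused with the case $X=E_j$; once those are handled, the remainder is routine contraction of \eqref{eq:R}.
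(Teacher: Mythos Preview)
Your proof is correct and follows essentially the same route as the paper: the same Bochner identity with a geodesic frame, the same symmetry-and-commutation step turning $\sum_i\langle(\nabla_{E_i}A_N)E_i,T\rangle$ into $T(\trace A_N)-\sum_i\langle R(T,E_i)T,E_i\rangle$, the same evaluation of the curvature term via \eqref{eq:R}, and the same cancellation of the $m\langle\sigma(T,T),H\rangle$ contributions. Your explicit remark that $\nabla_TE_i=0$ at $p$ (needed to justify $-\nabla_{\nabla_{E_i}T}T=\nabla_{[T,E_i]}T$ and $\sum_i\langle\nabla_T(A_NE_i),E_i\rangle=T(\trace A_N)$) is a welcome clarification of a point the paper uses silently.
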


\section{Some gap theorems for pmc submanifolds in $M^n(c)\times\mathbb{R}$}

In this Section we shall present some applications of Propositions \ref{p:delta} and \ref{pTgen} in the study of pmc submanifolds. First we have the following result.

\begin{proposition}\label{thm:cmc2} Let $\Sigma^m$ be an immersed complete pmc submanifold in
$M^n(c)\times\mathbb{R}$ with second fundamental form $\sigma$.
If
$$
\sup_{\Sigma^m}\{|\sigma|^2+c(m-1)|T|^2\}<\max\{0,c(m-1)\},
$$
then either
\begin{enumerate}
\item $\Sigma^m$ lies in $M^n(c)$, if $c>0$$;$ or

\item $\Sigma^m$ is a vertical cylinder $\pi^{-1}(\Sigma^{m-1})$ over a pmc submanifold $\Sigma^{m-1}$ in $M^n(c)$, if $c<0$.
\end{enumerate}
\end{proposition}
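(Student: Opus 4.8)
The plan is to use only the formula for $\Delta|T|^2$ from Proposition \ref{pTgen}; the full Simons formula of Proposition \ref{p:delta} is not needed for this statement. The first thing I would check is that in either case the hypothesis bounds $|\sigma|^2$ from above: when $c>0$ the quantity $c(m-1)|T|^2$ is nonnegative, so $|\sigma|^2\le\sup_{\Sigma^m}\{|\sigma|^2+c(m-1)|T|^2\}$, and when $c<0$ one gets $|\sigma|^2\le|c|(m-1)|T|^2\le|c|(m-1)$. Hence, by the Gauss equation \eqref{eq:R}, the Ricci curvature of $\Sigma^m$ is bounded from below, so the Omori--Yau maximum principle (Theorem \ref{OY}) applies. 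Set $\delta:=\max\{0,c(m-1)\}-\sup_{\Sigma^m}\{|\sigma|^2+c(m-1)|T|^2\}>0$, so that $|\sigma|^2+c(m-1)|T|^2\le\max\{0,c(m-1)\}-\delta$ everywhere; recall also $|T|^2+|N|^2=1$, so $0\le|T|^2\le 1$.

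Suppose $c>0$. I would show that $|T|^2$ is subharmonic. In Proposition \ref{pTgen} the terms $|A_N|^2$ and $c(m-1)|T|^2(1-|T|^2)$ are $\ge 0$, and the last term is handled by the elementary bound $\sum_{\alpha}|A_{\alpha}T|^2\le|\sigma|^2|T|^2$ (each $|A_{\alpha}T|\le|A_{\alpha}|\,|T|$). Since the hypothesis here reads $|\sigma|^2\le c(m-1)(1-|T|^2)-\delta$, one obtains
\[
\frac12\Delta|T|^2\ \ge\ |T|^2\bigl(c(m-1)(1-|T|^2)-|\sigma|^2\bigr)\ \ge\ \delta|T|^2\ \ge\ 0 .
\]
Applying Theorem \ref{OY} to $u=|T|^2$ (bounded above by $1$) produces points $p_k$ with $|T|^2(p_k)\to\sup|T|^2$ and $\Delta|T|^2(p_k)<1/k$, whence $\delta|T|^2(p_k)<1/(2k)$ and $\sup|T|^2=0$. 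Thus $T\equiv 0$, the height function of the immersion is constant, and $\Sigma^m$ lies in a slice $M^n(c)\times\{t_0\}$, i.e. in $M^n(c)$.

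Suppose $c<0$. Now I would show that $|T|^2$ is superharmonic. Discarding the nonpositive term $-\sum_{\alpha}|A_{\alpha}T|^2$ and using $|A_N|^2=\bigl|\sum_{\alpha}\langle N,E_{\alpha}\rangle A_{\alpha}\bigr|^2\le|N|^2|\sigma|^2=(1-|T|^2)|\sigma|^2$, together with $c(m-1)|T|^2(1-|T|^2)=-|c|(m-1)|T|^2(1-|T|^2)$ and the hypothesis $|\sigma|^2\le|c|(m-1)|T|^2-\delta$, the curvature term cancels and one is left with
\[
\frac12\Delta|T|^2\ \le\ (1-|T|^2)\bigl(|\sigma|^2-|c|(m-1)|T|^2\bigr)\ \le\ -\delta(1-|T|^2)\ \le\ 0 .
\]
The hypothesis, combined with $|\sigma|^2\ge 0$, also gives $|T|^2\ge\delta/(|c|(m-1))>0$, so $-|T|^2$ is bounded above by a negative constant; applying Theorem \ref{OY} to $u=-|T|^2$ yields points $p_k$ with $|T|^2(p_k)\to\inf|T|^2$ and $\Delta|T|^2(p_k)>-1/k$, hence $\delta(1-|T|^2(p_k))<1/(2k)$ and $\inf|T|^2=1$. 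Therefore $|T|^2\equiv 1$ and $N\equiv 0$, so $\xi$ is everywhere tangent to $\Sigma^m$; then $\Sigma^m=\pi^{-1}(\pi(\Sigma^m))$ is a vertical cylinder over $\Sigma^{m-1}:=\pi(\Sigma^m)$, and by Remark \ref{rem:cyl} the base $\Sigma^{m-1}$ is a pmc submanifold of $M^n(c)$.

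The only real obstacle is extracting from Proposition \ref{pTgen} a one-sided differential inequality with a definite sign. This hinges on choosing, in each case, the correct elementary norm estimate — $\sum_{\alpha}|A_{\alpha}T|^2\le|\sigma|^2|T|^2$ for $c>0$ and $|A_N|^2\le(1-|T|^2)|\sigma|^2$ for $c<0$ — so that, after inserting the hypothesis, the term $c(m-1)|T|^2(1-|T|^2)$ is exactly absorbed and only $\pm\delta$ times $|T|^2$, respectively $1-|T|^2$, survives. One must also record that $|\sigma|$ is bounded (which is what legitimizes Omori--Yau) and keep in mind that the two cases require applying that principle to $|T|^2$ and to $-|T|^2$ respectively.
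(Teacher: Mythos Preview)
Your argument is correct and follows essentially the same route as the paper: both proofs combine Proposition~\ref{pTgen} with the elementary estimates $\sum_{\alpha}|A_{\alpha}T|^2\le|T|^2|\sigma|^2$ (for $c>0$) and $|A_N|^2\le|N|^2|\sigma|^2$ (for $c<0$), check that $|\sigma|$ bounded implies Ricci bounded below, and then apply Omori--Yau. The only cosmetic difference is that in the $c<0$ case the paper applies Theorem~\ref{OY} to $u=|N|^2=1-|T|^2$ rather than to $u=-|T|^2$, which of course amounts to the same thing.
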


\begin{proof} Let us consider first the case when $c>0$. Then, from Proposition \ref {pTgen}, using our hypothesis, we have that
$$
\begin{array}{ll}
\frac{1}{2}\Delta|T|^2&=|A_N|^2+c(m-1)|T|^2(1-|T|^2)-\sum_{\alpha=m+1}^{n+1}|A_{\alpha}T|^2\\ \\&\geq |T|^2(c(m-1)(1-|T|^2)-|\sigma|^2)\\ \\&\geq 0.
\end{array}
$$

Next, let us consider a local orthonormal frame field $\{E_i\}_{i=1}^{m}$ on $\Sigma^m$, $X$ a unit tangent vector field, and $\{E_{\alpha}\}_{\alpha=m+1}^{n+1}$ an orthonormal frame field in the normal bundle. From equation \eqref{eq:R}, we get the expression of the Ricci curvature of our submanifold
$$
\begin{array}{lll}
\ric X&=&\sum_{i=1}^m\langle R(E_i,X)X,E_i\rangle\\ \\
&=&\sum_{i=1}^m\{c(|X|^2-\langle X,E_i\rangle^2-\langle X,T\rangle^2+2\langle X,T\rangle\langle T, E_i\rangle\langle X, E_i\rangle\\ \\&&-\langle T, E_i\rangle^2|X|^2)
+\sum_{\alpha=m+1}^{n+1}(\langle A_{\alpha}E_i,E_i\rangle\langle A_{\alpha}X,X\rangle-\langle A_{\alpha}X,E_i\rangle^2)\}
\\ \\&=&
c(m-1-|T|^2-(m-2)\langle X,T\rangle^2)+m\langle A_HX,X\rangle-\sum_{\alpha=m+1}^{n+1}|A_{\alpha}X|^2.
\end{array}
$$
It follows that
$$
\begin{array}{ll}
\ric X&\geq c(m-1)(1-|T|^2)-m|A_HX|-\sum_{\alpha=m+1}^{n+1}|A_{\alpha}|^2\\ \\ &\geq-m|A_H|-|\sigma|^2.
\end{array}
$$
Since
$|\sigma|$ is bounded by hypothesis, we can see that the Ricci curvature is bounded
from below, and then the Omori-Yau Maximum Principle holds on $\Sigma^m$.

Therefore, we can use Theorem \ref{OY} with $u=|T|^2$.
It follows that there exists a sequence of points
$\{p_k\}_{k\in\mathbb{N}}\subset \Sigma^m$ satisfying
$$
\lim_{k\rightarrow\infty}|T|^2(p_k)=\sup_{\Sigma^m}|T|^2\quad\textnormal{and}\quad\Delta|T|^2(p_k)<\frac{1}{k}.
$$
Since $\sup_{\Sigma^m}\{|\sigma|^2+c(m-1)|T|^2\}<c(m-1)$, it follows that
$0=\lim_{k\rightarrow\infty}|T|^2(p_k)=\sup_{\Sigma^m}|T|^2$,
which means that $T=0$, i.e. $\Sigma^m$ lies in $M^n(c)$.

When $c<0$, we come to the conclusion in the same way as above, using the facts that
$$
\begin{array}{ll}
\frac{1}{2}\Delta|N|^2&=-\frac{1}{2}\Delta|T|^2=-|A_N|^2-c(m-1)|T|^2(1-|T|^2)+\sum_{\alpha=m+1}^{n+1}|A_{\alpha}T|^2\\ \\&\geq |N|^2(-|\sigma|^2-c(m-1)|T|^2)\\ \\&\geq 0,
\end{array}
$$
and that
$$
\ric X\geq c(m-1)-m|A_H|-|\sigma|^2,
$$
and then applying Theorem \ref{OY} to function $u=|N|^2$.
\end{proof}

For minimal submanifolds in $M^n(c)\times\mathbb{R}$, with $c>0$, we have the following result.

\begin{proposition} Let $\Sigma^m$ be a complete minimal submanifold in
$M^n(c)\times\mathbb{R}$, with $c>0$.
If
$$
\sup_{\Sigma^m}\{3|\sigma|^2+2c(2m+1)|T|^2\}<2cm,
$$
then $\Sigma^m$ is a totally geodesic submanifold in $M^n(c)$.
\end{proposition}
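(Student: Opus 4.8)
The plan is to run the same Omori--Yau scheme used in the proof of Proposition~\ref{thm:cmc2}, but applied to the function $u=|\sigma|^2$ rather than to $|T|^2$, feeding it with the minimal Simons formula of Corollary~\ref{coro_min} and the algebraic inequality of Lemma~\ref{lili}.

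First I would estimate the last term of Corollary~\ref{coro_min}. Since each $A_\alpha$ is symmetric, the commutator $[A_\alpha,A_\beta]$ is skew-symmetric, so $N(A_\alpha A_\beta-A_\beta A_\alpha)=-\trace[A_\alpha,A_\beta]^2$, and Lemma~\ref{lili} applied to $A_{m+1},\dots,A_{n+1}$ (for which $\sum_\alpha N(A_\alpha)=|\sigma|^2$) gives
$$
\sum_{\alpha,\beta=m+1}^{n+1}\bigl\{\trace[A_\alpha,A_\beta]^2-(\trace(A_\alpha A_\beta))^2\bigr\}\geq-\tfrac{3}{2}|\sigma|^4.
$$
Combining this with the pointwise bound $\sum_\alpha|A_\alpha T|^2\leq|\sigma|^2|T|^2$ and with $m|A_N|^2\geq0$, Corollary~\ref{coro_min} yields
$$
\tfrac{1}{2}\Delta|\sigma|^2\geq|\nabla^{\perp}\sigma|^2+\tfrac{1}{2}|\sigma|^2\bigl(2cm-3|\sigma|^2-2c(2m+1)|T|^2\bigr).
$$
By hypothesis $\delta:=2cm-\sup_{\Sigma^m}\{3|\sigma|^2+2c(2m+1)|T|^2\}>0$, so $\tfrac{1}{2}\Delta|\sigma|^2\geq\tfrac{\delta}{2}|\sigma|^2\geq0$; in particular $|\sigma|^2$ is bounded above.

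Next I would check that Theorem~\ref{OY} applies: with $H=0$ the Ricci estimate from the proof of Proposition~\ref{thm:cmc2} reduces to $\ric X\geq-|\sigma|^2$, which is bounded below since $|\sigma|$ is bounded. Applying Theorem~\ref{OY} to $u=|\sigma|^2$ produces a sequence $\{p_k\}$ with $|\sigma|^2(p_k)\to\sup_{\Sigma^m}|\sigma|^2$ and $\Delta|\sigma|^2(p_k)<1/k$; plugging into the displayed inequality gives $1/k>\delta\,|\sigma|^2(p_k)$, hence $\sup_{\Sigma^m}|\sigma|^2=0$ and $\sigma\equiv0$. To upgrade this to "$\Sigma^m$ lies in $M^n(c)$", note that $\sigma=0$ forces every shape operator, in particular $A_N$, to vanish, so $\nabla_XT=A_NX=0$ and $|T|^2$ is constant; then Proposition~\ref{pTgen} collapses to $0=c(m-1)|T|^2(1-|T|^2)$, so $|T|^2\in\{0,1\}$, while the hypothesis together with $\sigma=0$ forces $2c(2m+1)|T|^2<2cm$, i.e.\ $|T|^2<1$. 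Hence $T\equiv0$, $\xi$ is everywhere normal, $\Sigma^m$ is contained in a slice $M^n(c)\times\{t_0\}$, and being totally geodesic in the product it is a totally geodesic submanifold of $M^n(c)$.

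I expect the only delicate point to be the bookkeeping in the second step: one has to verify that the contributions $c(m-|T|^2)|\sigma|^2$, $-2cm|T|^2|\sigma|^2$ and $-\tfrac{3}{2}|\sigma|^4$ combine exactly into $\tfrac{1}{2}|\sigma|^2(2cm-3|\sigma|^2-2c(2m+1)|T|^2)$, so that the threshold $2cm$ in the hypothesis is precisely the one making $|\sigma|^2$ strictly subharmonic; everything else is a direct transcription of the argument of Proposition~\ref{thm:cmc2}.
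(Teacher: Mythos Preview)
Your proof is correct and follows essentially the same route as the paper's: estimate the Simons formula of Corollary~\ref{coro_min} using $|A_N|^2\geq0$, the Schwarz bound $\sum_\alpha|A_\alpha T|^2\leq|T|^2|\sigma|^2$, and Lemma~\ref{lili}, then apply the Omori--Yau principle to $|\sigma|^2$ to force $\sigma\equiv0$, and finish with Proposition~\ref{pTgen} to get $T\equiv0$. Your bookkeeping is right---the three contributions combine exactly into $\tfrac{1}{2}|\sigma|^2\bigl(2cm-3|\sigma|^2-2c(2m+1)|T|^2\bigr)$---and your justification of the final step ($|T|^2$ constant because $\nabla_XT=A_NX=0$, then $|T|^2<\frac{m}{2m+1}<1$ from the hypothesis) is spelled out a bit more explicitly than in the paper.
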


\begin{proof} From Corollary \ref{coro_min}, since Schwarz inequality implies that $|A_{\alpha}T|^2\leq|T|^2|A_{\alpha}|^2$, using $|A_N|^2\geq 0$ and Lemma \ref{lili}, we obtain
$$
\Delta|\sigma|^2\geq -(3|\sigma|^2+2c((2m+1)|T|^2-m))|\sigma|^2\geq 0.
$$
As we have seen, since $|\sigma|$ is bounded, the Ricci curvature of
$\Sigma^m$ is bounded from below, and then we can apply the
Omori-Yau Maximum Principle to function $u=|\sigma|^2$. One
obtains that there exists a sequence of points
$\{p_k\}_{k\in\mathbb{N}}\subset \Sigma^m$ satisfying
$$
\lim_{k\rightarrow\infty}|\sigma|^2(p_k)=\sup_{\Sigma^m}|\sigma|^2\quad\textnormal{and}\quad\Delta|\sigma|^2(p_k)<\frac{1}{k},
$$
from where it follows that
$0=\lim_{k\rightarrow\infty}|\sigma|^2(p_k)=\sup_{\Sigma^m}|\sigma|^2$,
which means that $\sigma=0$. Moreover, $A_N=0$ and then the hypothesis
imply that $|T|^2=\cst<1$. From Proposition \ref{pTgen}, it
follows that $T=0$, which means that our submanifold is totally geodesic
in $M^n(c)$.
\end{proof}

Before stating our first main result, we shall prove the following lemma, which shall be then used in its proof.

\begin{lemma}\label{HN}  Let $\Sigma^m$ be an immersed non-minimal pmc submanifold in
$M^n(c)\times\mathbb{R}$ with mean curvature vector field $H$. Then we have
$$
\Delta\langle H,N\rangle=-c(m-1)|T|^2\langle H,N\rangle-\trace(A_HA_N).
$$
\end{lemma}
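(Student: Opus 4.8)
The plan is to produce $\grad\langle H,N\rangle$ explicitly and then take its divergence. Since $\xi$ is parallel and $\xi=T+N$, splitting $\bar\nabla_X\xi=0$ into its tangent and normal parts yields the two identities already used in the paper, $\nabla_XT=A_NX$ and $\nabla^{\perp}_XN=-\sigma(X,T)$. Because $H$ is parallel in the normal bundle, $|H|$ is constant; as $\Sigma^m$ is non-minimal this forces $H$ to be nowhere zero and $\trace A_H=\langle\trace\sigma,H\rangle=m|H|^2$ to be a nonzero constant. For a tangent vector field $X$ we then obtain
$$X\langle H,N\rangle=\langle\nabla^{\perp}_XH,N\rangle+\langle H,\nabla^{\perp}_XN\rangle=-\langle\sigma(X,T),H\rangle=-\langle A_HT,X\rangle,$$
so that $\grad\langle H,N\rangle=-A_HT$.

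Next I would fix $p\in\Sigma^m$, choose a geodesic frame field $\{E_i\}_{i=1}^m$ around $p$, and compute at $p$
$$\Delta\langle H,N\rangle=\Div(\grad\langle H,N\rangle)=-\sum_{i=1}^m\langle\nabla_{E_i}(A_HT),E_i\rangle=-\sum_{i=1}^m\langle(\nabla_{E_i}A_H)T+A_H(\nabla_{E_i}T),E_i\rangle,$$
where I set $(\nabla_{E_i}A_H)T:=\nabla_{E_i}(A_HT)-A_H(\nabla_{E_i}T)$. The point of choosing the frame with $E_{m+1}=H/|H|$ is that $\nabla^{\perp}H=0$ makes all connection forms $s_{m+1,\beta}$ vanish, so this $\nabla A_H$ is exactly (up to the constant factor $|H|$) the one occurring in the Codazzi equation \eqref{eq:Codazzi} for $\alpha=m+1$. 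Using $\nabla_{E_i}T=A_NE_i$, the second group of terms contributes $-\sum_i\langle A_HA_NE_i,E_i\rangle=-\trace(A_HA_N)$.

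For the remaining sum I would invoke \eqref{eq:Codazzi} which, for this index and after multiplying by the constant $|H|$, reads $(\nabla_XA_H)Y=(\nabla_YA_H)X+c\langle H,N\rangle(\langle Y,T\rangle X-\langle X,T\rangle Y)$. Taking $X=E_i$, $Y=T$, pairing with $E_i$ and summing gives
$$\sum_{i=1}^m\langle(\nabla_{E_i}A_H)T,E_i\rangle=\sum_{i=1}^m\langle(\nabla_TA_H)E_i,E_i\rangle+c\langle H,N\rangle\sum_{i=1}^m(|T|^2-\langle E_i,T\rangle^2)=T(\trace A_H)+c(m-1)|T|^2\langle H,N\rangle.$$
Since $\trace A_H=m|H|^2$ is constant, $T(\trace A_H)=0$, and assembling the two contributions yields $\Delta\langle H,N\rangle=-c(m-1)|T|^2\langle H,N\rangle-\trace(A_HA_N)$, as claimed.

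The only genuinely delicate point is the bookkeeping in the second paragraph: one must be sure that the covariant derivative of $A_H$ used in expanding $\Div(A_HT)$ is the same object that appears in the Codazzi equation, and this is precisely where the parallelism of $H$ — through $s_{m+1,\beta}\equiv0$ and $|H|$ constant — is essential; apart from that, everything reduces to short, direct computations.
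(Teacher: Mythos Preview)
Your proof is correct and essentially the same as the paper's. The only cosmetic difference is that after reaching $-\sum_i\langle(\nabla_{E_i}A_H)T,E_i\rangle-\trace(A_HA_N)$, the paper first uses the symmetry of $\nabla_{E_i}A_H$ to rewrite $\langle(\nabla_{E_i}A_H)T,E_i\rangle=\langle(\nabla_{E_i}A_H)E_i,T\rangle$ and then invokes the Codazzi-derived identity \eqref{eq:sum_nabla}, whereas you apply the Codazzi equation \eqref{eq:Codazzi} directly to swap $E_i$ and $T$; both routes yield the term $T(\trace A_H)+c(m-1)|T|^2\langle H,N\rangle$ with $T(\trace A_H)=0$.
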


\begin{proof} Let $\{E_i\}_{i=1}^m$ be a geodesic frame field around a point $p\in\Sigma^m$. Then, since $H$ is parallel and $\nabla^{\perp}_XN=-\sigma(X,T)$, we have, at $p$,
$$
\Delta\langle H,N\rangle=\sum_{i=1}^m E_i(E_i(\langle H,N\rangle))=-\sum_{i=1}^m E_i(\langle A_HT,E_i\rangle).
$$
Using the facts that $\nabla_{X}A_H$ is symmetric and that $\nabla_XT=A_NX$, and also equation \eqref{eq:sum_nabla}, we get
$$
\begin{array}{ll}
\Delta\langle H,N\rangle&=-\sum_{i=1}^m E_i(\langle A_HT,E_i\rangle)=-\sum_{i=1}^m\langle\nabla_{E_i}A_HT,E_i\rangle\\ \\&=-\sum_{i=1}^m(\langle(\nabla_{E_i}A_H)T,E_i\rangle+\langle A_H\nabla_{E_i}T,E_i\rangle)\\ \\&=-\sum_{i=1}^m\langle(\nabla_{E_i}A_H)E_i,T\rangle-\trace(A_HA_N)\\ \\&=-c(m-1)|T|^2\langle H,N\rangle-\trace(A_HA_N).
\end{array}
$$
\end{proof}

Our main results are similar to those obtained in \cite{AT,CN} for
the pmc submanifolds of a sphere and Euclidean space, and, again as
in the above cited papers, their proofs rely on the use of formulas
obtained in Section \ref{section_S} and of Lemmas \ref{l:chen} and
\ref{lili}.

\begin{theorem}\label{thm:pmc_red_1} Let $\Sigma^m$ be a complete non-minimal pmc submanifold in
$M^n(c)\times\mathbb{R}$, $n>m\geq 3$, $c>0$, with mean curvature vector field $H$ and second fundamental form $\sigma$. If the angle between $H$ and $\xi$ is constant and
\begin{equation}\label{cond_thm}
|\sigma|^2+\frac{2c(2m+1)}{m}|T|^2\leq 2c+\frac{m^2}{m-1}|H|^2,
\end{equation}
then $\Sigma^m$ is a totally umbilical cmc hypersurface in
$M^{m+1}(c)$.
\end{theorem}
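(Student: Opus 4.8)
The plan is to combine the Simons-type formula from Proposition~\ref{p:delta} (or rather its traceless reformulation in the Corollary following it) with the formula for $\frac{1}{2}\Delta|T|^2$ from Proposition~\ref{pTgen} and with Lemma~\ref{HN}, in order to produce a single function whose Laplacian is non-negative, and then invoke the Omori--Yau Maximum Principle (Theorem~\ref{OY}) to force that function to be constant. Since the angle between $H$ and $\xi$ is constant, $\langle H,N\rangle = \langle H,\xi\rangle$ is constant, so Lemma~\ref{HN} immediately gives
$$
c(m-1)|T|^2\langle H,N\rangle + \trace(A_H A_N) = 0.
$$
This is the structural input that lets us control the cross terms $\langle\sigma(T,T),H\rangle$ and $|A_N|^2$ appearing in Proposition~\ref{p:delta}: I would first note that constancy of the angle, together with $\nabla^\perp H=0$, forces either $N=0$ everywhere (the cylinder-free case) or $H$ everywhere orthogonal to $\xi$; in the relevant non-cylinder situation one deduces $\langle H,N\rangle=0$, hence $N$ and $T$ behave well and $\trace(A_HA_N)=0$.

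**Key steps in order.** (1) Use the constant-angle hypothesis and Lemma~\ref{HN} to eliminate the $\langle H,N\rangle$-terms and simplify the normal-bundle curvature contributions; in particular reduce to the case where one may treat $A_N$ and $|T|^2$ via Proposition~\ref{pTgen} cleanly. (2) Apply Lemma~\ref{lili} to the traceless shape operators $\{\phi_\alpha\}$ to bound $\sum_{\alpha,\beta>m+1}\trace[\phi_\alpha,\phi_\beta]^2 - \sum_{\alpha,\beta}(\trace(\phi_\alpha\phi_\beta))^2$ from below by a negative multiple of $|\phi|^4$, and handle the cubic term $m|H|\sum_\alpha\trace(\phi_\alpha^2\phi_{m+1})$ by the standard estimate $|\sum_i\lambda_i^3|\le \frac{m-2}{\sqrt{m(m-1)}}(\sum_i\lambda_i^2)^{3/2}$ for the eigenvalues of $\phi_{m+1}$ (and Schwarz for the off-diagonal pieces). (3) Bound $\sum_\alpha|\phi_\alpha T|^2 \le |T|^2|\phi|^2$ (and similarly $|\phi_N|^2$, $\langle\phi(T,T),H\rangle$) by Schwarz, and assemble everything into an inequality of the shape
$$
\tfrac{1}{2}\Delta|\phi|^2 \ge |\phi|^2\,\big(P(|\phi|^2,|T|^2,|H|^2,c)\big),
$$
where $P$ is affine in the listed quantities, arranged so that condition~\eqref{cond_thm} makes $P\ge 0$. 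The constant $\tfrac{m^2}{m-1}|H|^2$ and the $2c$ on the right of~\eqref{cond_thm} are precisely what is needed to absorb the cubic-term coefficient $\tfrac{m-2}{\sqrt{m(m-1)}}$ and the curvature terms, respectively — this is why $m\ge 3$ is assumed. (4) Since $|\sigma|$ is bounded by~\eqref{cond_thm}, the Ricci curvature is bounded below (the same computation as in Proposition~\ref{thm:cmc2}), so Omori--Yau applies to $u=|\phi|^2$: one gets a sequence $p_k$ with $|\phi|^2(p_k)\to\sup|\phi|^2$ and $\Delta|\phi|^2(p_k)<1/k$, forcing $\sup|\phi|^2\cdot P = 0$ in the limit, hence $|\phi|^2\equiv$ constant and $P\equiv 0$ on its support. (5) Analyze the equality case: either $\phi\equiv 0$, so $\Sigma^m$ is pseudo-umbilical pmc and hence (as it is pmc with $A_H=|H|^2\id$) one argues it is cmc and reduces codimension; or Lemma~\ref{lili} equality holds, meaning exactly two $\phi_\alpha$ are nonzero of a very special rank-two form — but then checking this against $\nabla^\perp H=0$ and the equality in~\eqref{cond_thm} should be incompatible with $n>m\ge 3$, leaving only the umbilical case, which lands in a totally geodesic $M^{m+1}(c)\times\mathbb{R}$ and then, since $T=0$ is forced, in $M^{m+1}(c)$.

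**Main obstacle.** The delicate part is Step~(3)–(5): getting the constants to line up so that the right-hand side of~\eqref{cond_thm} is exactly the threshold, and then ruling out the Lemma~\ref{lili} equality alternative. The cubic term is the crux — one must diagonalize $\phi_{m+1}=\phi_H/|H|$, use that its eigenvalues sum to zero, apply the sharp $\sum\lambda_i^3$ bound, and simultaneously control $\trace(\phi_\alpha^2\phi_{m+1})$ for $\alpha>m+1$ by $|\phi_\alpha|^2\cdot\max|\lambda_i|$; balancing these against the quartic terms from Lemma~\ref{lili} is what produces the coefficient $\tfrac{m^2}{m-1}$ and forces $m\ge 3$. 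Ruling out the degenerate equality case will require combining the rigidity in Lemma~\ref{lili}(2) with the codimension hypothesis $n>m$ and the fact that $H=|H|E_{m+1}$ is parallel, which is incompatible with having two nonzero traceless operators of the stated normal form unless one of them is $\phi_{m+1}$ itself — and then umbilicity of $\Sigma$ in the $c>0$ factor follows, together with $N=0$ from Proposition~\ref{pTgen} as in Proposition~\ref{thm:cmc2}.
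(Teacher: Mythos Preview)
Your proposal has two genuine gaps. First, the claim in Step~(1) that the constant-angle hypothesis forces either $N=0$ or $\langle H,N\rangle=0$ is unjustified: constancy of $\langle H,N\rangle$ together with Lemma~\ref{HN} only yields the identity $\trace(A_{m+1}A_N)=-c(m-1)|T|^2\langle E_{m+1},N\rangle$, and the paper uses this not to kill $\langle H,N\rangle$ but to show that the combination $|A_N|^2-(\trace(A_NA_{m+1}))\langle E_{m+1},N\rangle=|A_N|^2+c(m-1)|T|^2\langle E_{m+1},N\rangle^2$ appearing in $\Delta|\mathcal A|^2$ is $\geq 0$. The vanishing of $T$ (and hence of $N$ in the normal direction of $H$) is established only \emph{after} codimension reduction.

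Second, and more seriously, your treatment of the cubic term via the Okumura-type bound $|\sum\mu_i^3|\le\frac{m-2}{\sqrt{m(m-1)}}(\sum\mu_i^2)^{3/2}$ for $\phi_{m+1}$ combined with $|\trace(\phi_\alpha^2\phi_{m+1})|\le|\phi_\alpha|^2\max|\mu_i|$ for $\alpha>m+1$ is not sharp enough to reproduce the threshold in~\eqref{cond_thm}; these two estimates have incompatible constants and do not collapse into a clean inequality in $|\phi|^2$. The paper takes a structurally different route: it does \emph{not} study $\Delta|\phi|^2$ but rather $\Delta|\mathcal A|^2$ where $|\mathcal A|^2=\sum_{\alpha>m+1}|A_\alpha|^2$, and handles the mixed term $\sum_{\alpha>m+1}\{(\trace A_{m+1})(\trace(A_\alpha^2A_{m+1}))-(\trace(A_\alpha A_{m+1}))^2\}$ by writing it as $\tfrac12\sum_{i,j}\lambda_i\lambda_j(\lambda_i^\alpha-\lambda_j^\alpha)^2$ and applying Lemma~\ref{l:chen} to bound $\lambda_i\lambda_j$ from below, where the hypothesis~\eqref{cond_thm} is exactly recast as $(\sum\lambda_i)^2\ge(m-1)\sum\lambda_i^2+(m-1)(|\mathcal A|^2+\tfrac{2c(2m+1)}{m}|T|^2-2c)$. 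This, together with Lemma~\ref{lili}, gives $\tfrac12\Delta|\mathcal A|^2\ge\tfrac{m-3}{2}|\mathcal A|^4$, whence Omori--Yau forces $|\mathcal A|^2=0$ for $m\ge4$; the borderline case $m=3$ requires a separate limiting argument exploiting the equality clause of Lemma~\ref{lili}. Only after $A_\alpha=0$ for $\alpha>m+1$ does one show $T=0$ via Proposition~\ref{pTgen} and then invoke \cite[Theorem~1.5]{AdC} to conclude umbilicity. Your plan misses Lemma~\ref{l:chen} entirely, which is the key algebraic device making the constants in~\eqref{cond_thm} exact.
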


\begin{proof} We shall prove first that $\Sigma^m$ actually lies in a space form $M^{m+1}(c)$, and, in order to do that, we will show that, if $\{E_{m+1},\ldots,E_{n+1}\}$ is a local orthonormal frame field in the normal bundle such that $E_{m+1}=\frac{H}{|H|}$, then $A_{\alpha}=0$ for all $\alpha>m+1$.

Let us recall now a formula proved in \cite{FOR}, which can be also obtained as a particular case of the computation in Section \ref{section_S}, tacking into account that, since $E_{m+1}$ is parallel, we have $[A_{m+1},A_{\alpha}]=0$ for all $\alpha\geq m+1$,
\begin{equation}\label{eq:delta_H}
\begin{array}{lcl}
\frac{1}{2}\Delta|A_{m+1}|^2&=&|\nabla A_{m+1}|^2+c\{(m-|T|^2)|A_{m+1}|^2-2m|A_{m+1}T|^2\\ \\&&+3m\langle \sigma(T,T),H\rangle+m(\trace(A_NA_{m+1}))\langle E_{m+1},N\rangle\\ \\&&-m^2\langle H,N\rangle^2-m^2|H|^2\}\\ \\&&+\sum_{\alpha=m+1}^{n+1}\{(\trace A_{\alpha})(\trace(A_{m+1}^2A_{\alpha}))-(\trace(A_{m+1}A_{\alpha}))^2\}.
\end{array}
\end{equation}

Next, we define the function $|\mathcal{A}|^2$ on $\Sigma^m$ by $|\mathcal{A}|^2=\sum_{\alpha>m+1}|A_{\alpha}|^2$,
and, using \eqref{eq:delta_H}, we obtain, from Proposition \ref{p:delta}, that
\begin{equation}\label{eq:A}
\begin{array}{lcl}
\frac{1}{2}\Delta|\mathcal{A}|^2&=&\sum_{\alpha>m+1}|\nabla^{\ast}A_{\alpha}|^2+c\{(m-|T|^2)|\mathcal{A}|^2-2m\sum_{\alpha>m+1}|A_{\alpha} T|^2\\ \\&&+m|A_N|^2-m(\trace(A_NA_{m+1}))\langle E_{m+1},N\rangle\}\\ \\&&+\sum_{\alpha>m+1}\{(\trace A_{m+1})(\trace(A_{\alpha}^2A_{m+1}))-(\trace(A_{\alpha}A_{m+1}))^2\}\\ \\&&+\sum_{\alpha,\beta>m+1}\{\trace[A_{\alpha},A_{\beta}]^2-(\trace(A_{\alpha}A_{\beta}))^2\},
\end{array}
\end{equation}
where $\nabla^{\ast}$ is the sum of the tangent and normal connections and
$$
\nabla^{\ast}_XA_{\alpha}=\nabla_XA_{\alpha}-\sum_{\beta>m+1}s_{\alpha\beta}(X)A_{\beta}.
$$

The Schwarz inequality implies that
\begin{equation}\label{estimate1}
-\sum_{\alpha>m+1}|A_{\alpha} T|^2\geq-|T|^2\sum_{\alpha>m+1}|A_{\alpha}|^2=-|T|^2|\mathcal{A}|^2.
\end{equation}

From Lemma \ref{HN}, since $\langle H,N\rangle=\cst$, we have
\begin{equation}\label{estimate2}
|A_N|^2-(\trace(A_NA_{m+1}))\langle E_{m+1},N\rangle=|A_N|^2+c(m-1)|T|^2\langle E_{m+1},N\rangle^2\geq 0.
\end{equation}

Since $\trace[A_{\alpha},A_{\beta}]^2=-N(A_{\alpha}A_{\beta}-A_{\beta}A_{\alpha})$, using Lemma \ref{lili}, we get
\begin{equation}\label{estimate3}
\sum_{\alpha,\beta>m+1}\{\trace[A_{\alpha},A_{\beta}]^2-(\trace(A_{\alpha}A_{\beta}))^2\}\geq
-\frac{3}{2}\Big(\sum_{\alpha>m+1}|A_{\alpha}|^2\Big)^2=-\frac{3}{2}|\mathcal{A}|^4.
\end{equation}

Next, we shall evaluate the term
$$
\sum_{\alpha>m+1}\{(\trace A_{m+1})(\trace(A_{\alpha}^2A_{m+1}))-(\trace(A_{\alpha}A_{m+1}))^2\}
$$
in \eqref{eq:A}. In order to do that, we note first that, since $[A_{m+1},A_{\alpha}]=0$, the matrices $A_{m+1}$ and $A_{\alpha}$ can be diagonalized simultaneously, for each $\alpha>m+1$. Let $\lambda_i$ and $\lambda_i^{\alpha}$, $i=1,\ldots,m$, be the eigenvalues of $A_{m+1}$ and $A_{\alpha}$, respectively. Then, for each $\alpha>m+1$, we have
\begin{equation}\label{eq:term}
\begin{array}{r}
(\trace A_{m+1})(\trace(A_{\alpha}^2A_{m+1}))-(\trace(A_{\alpha}A_{m+1}))^2\quad\quad\\ \\=(\sum_{i=1}^{m}\lambda_i)(\sum_{j=1}^m\lambda_j(\lambda_j^{\alpha})^2)
-(\sum_{i=1}^{m}\lambda_i\lambda_i^{\alpha})(\sum_{j=1}^m\lambda_j\lambda_j^{\alpha})\\ \\=\frac{1}{2}\sum_{i,j=1}^m\lambda_i\lambda_j(\lambda_i^{\alpha}-\lambda_j^{\alpha})^2.
\end{array}
\end{equation}

Our hypothesis \eqref{cond_thm} can be written as
$$
(m|H|)^2\geq (m-1)|A_{m+1}|^2+(m-1)\Big(|\mathcal{A}|^2+\frac{2c(2m+1)}{m}|T|^2-2c\Big)
$$
which means that
\begin{equation}\label{eq:chen0}
\Big(\sum_{i=1}^m\lambda_i\Big)^2\geq(m-1)\sum_{i=1}^m(\lambda_i)^2+(m-1)\Big(|\mathcal{A}|^2+\frac{2c(2m+1)}{m}|T|^2-2c\Big).
\end{equation}
Thus, from Lemma \ref{l:chen}, it follows that
\begin{equation}\label{eq:ij}
\lambda_i\lambda_j\geq \frac{1}{2}|\mathcal{A}|^2+\frac{c(2m+1)}{m}|T|^2-c,
\end{equation}
for $i\neq j$, and then
\begin{equation}\label{eq:intermediar_e4}
\begin{array}{lcl}
\frac{1}{2}\sum_{i,j=1}^m\lambda_i\lambda_j(\lambda_i^{\alpha}-\lambda_j^{\alpha})^2&\geq&\frac{1}{2}\Big(\frac{1}{2}|\mathcal{A}|^2+\frac{c(2m+1)}{m}|T|^2-c\Big)\sum_{i,j=1}^m(\lambda_i^{\alpha}-\lambda_j^{\alpha})^2\\ \\&=&\Big(\frac{1}{2}|\mathcal{A}|^2+\frac{c(2m+1)}{m}|T|^2-c\Big)\sum_{i,j=1}^m((\lambda_i^{\alpha})^2-\lambda_i^{\alpha}\lambda_j^{\alpha})\\ \\&=&(\frac{m}{2}|\mathcal{A}|^2+c(2m+1)|T|^2-cm)|A_{\alpha}|^2\\ \\&&-\Big(\frac{1}{2}|\mathcal{A}|^2+\frac{c(2m+1)}{m}|T|^2-c\Big)\Big(\sum_{i=1}^m\lambda_i^{\alpha}\Big)^2\\ \\&=&(\frac{m}{2}|\mathcal{A}|^2+c(2m+1)|T|^2-cm)|A_{\alpha}|^2.
\end{array}
\end{equation}

Replacing in \eqref{eq:term}, we get
\begin{equation}\label{estimate4}
\begin{array}{r}
\sum_{\alpha>m+1}\{(\trace A_{m+1})(\trace(A_{\alpha}^2A_{m+1}))-(\trace(A_{\alpha}A_{m+1}))^2\}\\ \\\geq (\frac{m}{2}|\mathcal{A}|^2+c(2m+1)|T|^2-cm)|\mathcal{A}|^2.
\end{array}
\end{equation}

Now, from \eqref{eq:A}, \eqref{estimate1}, \eqref{estimate2}, \eqref{estimate3} and \eqref{estimate4}, one obtains
\begin{equation}\label{eq:finalA}
\frac{1}{2}\Delta|\mathcal{A}|^2\geq\frac{m-3}{2}|\mathcal{A}|^4.
\end{equation}

As we have seen in Proposition \ref{thm:cmc2}, the fact that
$|\sigma|$ is bounded implies that the Ricci curvature of $\Sigma^m$
is bounded from below. Therefore we can apply Theorem \ref{OY} to
function $u=|\mathcal{A}|^2$, and we get that there exists a
sequence of points $\{p_k\}_{k\in\mathbb{N}}\subset\Sigma^m$
satisfying
$$
\lim_{k\rightarrow\infty}|\mathcal{A}|^2(p_k)=\sup_{\Sigma^m}|\mathcal{A}|^2
\quad\textnormal{and}\quad\Delta |\mathcal{A}|^2(p_k)<\frac{1}{k}.
$$

From the inequality \eqref{eq:finalA} it follows that
$$
0=\lim_{k\rightarrow\infty}\Delta |\mathcal{A}|^2(p_k)\geq(m-3)\sup_{\Sigma^m}|\mathcal{A}|^2\geq 0,
$$
i.e. $(m-3)\sup_{\Sigma^m}|\mathcal{A}|^2=0$. Therefore, we get that
$m=3$ or $|\mathcal{A}|^2=0$.

Next, we shall split our study in two cases as $m\geq 4$ or $m=3$.

\textbf{Case I: $m\geq 4$.} In this case, we have
$|\mathcal{A}|^2=0$, and then $A_{\alpha}=0$ for all $\alpha>m+1$.
Moreover, all inequalities \eqref{estimate1}, \eqref{estimate2},
\eqref{estimate3} and \eqref{estimate4} become equalities. Since
$A_N=0$, we get that $|T|^2$ is constant and that $\langle
H,N\rangle=0$. We also have
$$
0=X(\langle H,
N\rangle)=\langle H,\nabla^{\perp}_XN\rangle=-|H|\langle E_{m+1},\sigma(T,X)\rangle
=-|H|\langle A_{m+1}T,X\rangle,
$$
for any tangent vector field $X$. Therefore, from Proposition
\ref{pTgen}, it follows that
$$
0=c(m-1)|T|^2(1-|T|^2),
$$
i.e. either $T=0$ or $T=\pm\xi$.

If $T=\pm\xi$, then $\Sigma^m$ is a vertical cylinder
$\pi^{-1}(\Sigma^{m-1})$ over a pmc submanifold $\Sigma^{m-1}$ in
$M^n(c)$ with second fundamental form $\sigma_0$, satisfying
$|\sigma_0|=|\sigma|$, and mean curvature vector field
$H_0=\frac{m}{m-1}H$. Then, condition \eqref{cond_thm} can be
rewritten as
$$
|\sigma_0|^2\leq (m-1)|H_0|^2-\frac{2c(m+1)}{m}<(m-1)|H_0|^2,
$$
which is a contradiction, since the squared norm of the traceless
part $\phi_0$ of $\sigma_0$ satisfies
$$
0\leq|\phi_0|^2=|\sigma_0|^2-(m-1)|H_0|^2.
$$

Hence, we have $T=0$, i.e. $\xi$ is normal to $\Sigma^m$. Since
$A_{\alpha}=0$ for all $\alpha>m+1$, it follows that the subbundle
$L=\Span\{\sigma\}=\Span\{H\}$ of the normal bundle is parallel,
i.e. $\nabla^{\perp}V\in L$ for all $V\in L$. Now, one can see that
$T\Sigma^m\oplus L$ is parallel, orthogonal to $\xi$, and invariant
by the curvature tensor $\bar R$. Using \cite[Theorem~2]{ET}, all
these lead to the conclusion that $\Sigma^m$ lies in an
$m+1$-dimensional totally geodesic submanifold of
$M^n(c)\times\mathbb{R}$, which is also orthogonal to $\xi$, i.e.
$\Sigma^m$ is a cmc hypersurface in $M^{m+1}(c)$.

\textbf{Case II: $m=3$.} We shall prove that $|\mathcal{A}|^2=0$ in
this situation too, which means, as we have seen above, that
$\Sigma^3$ is a cmc hypersurface in $M^4(c)$.

Our hypothesis \eqref{cond_thm} implies that the sequence
$\{\sigma_{ij}^{\alpha}(p_k)\}_{k\in\mathbb{N}}$, where
$$
\sigma_{ij}^{\alpha}=\langle\sigma(E_i,E_j),E_{\alpha}\rangle,
$$
is bounded for all $i$, $j$ and $\alpha$. We also know that the
sequence $\{|T|^2(p_k)\}_{k\in\mathbb{N}}$ is bounded. Therefore,
there exits a subsequence $\{p_{k_r}\}_{k_r\in\mathbb{N}}$ of
$\{p_k\}_{k\in\mathbb{N}}$ such that the following limits exit
$$
\bar\sigma_{ij}^{\alpha}=\lim_{k_r\rightarrow\infty}\sigma_{ij}^{\alpha}(p_{k_r})<\infty\quad\textnormal{and}\quad
|\bar T|^2=\lim_{k_r\rightarrow\infty}|T|^2(p_{k_r})<\infty,
$$
and we denote by
$$
\bar A_{\alpha}=\lim_{k_r\rightarrow\infty}A_{\alpha}(p_{k_r})
$$
the matrix with the entries $\bar\sigma_{ij}^{\alpha}$.

From $\lim_{k_r\rightarrow\infty}\Delta|\mathcal{A}|^2(p_{k_r})=0$,
it follows that, when we take the limit after $k_r\rightarrow\infty$,
all inequalities \eqref{estimate1}, \eqref{estimate2},
\eqref{estimate3}, and \eqref{estimate4} become equalities. Then,
from \eqref{estimate3} and \eqref{estimate4} we obtain
\begin{equation}\label{estimate3lim}
\sum_{\alpha,\beta>4}\{\trace[\bar A_{\alpha},\bar A_{\beta}]^2-(\trace(\bar A_{\alpha}\bar A_{\beta}))^2\}
=-\frac{3}{2}\Big(\sum_{\alpha>4}|\bar A_{\alpha}|^2\Big)^2=-\frac{3}{2}\Big(\sup_{\Sigma^3}|\mathcal A|^2\Big)^2
\end{equation}
and
\begin{equation}\label{estimate4lim}
\begin{array}{r}\sum_{\alpha>4}\{(\trace \bar A_{4})(\trace(\bar A_{\alpha}^2\bar A_{4}))-(\trace(\bar A_{\alpha}\bar A_{4}))^2\}
\\ \\=(\frac{3}{2}\sum_{\alpha>4}|\bar A_{\alpha}|^2+7c|\bar T|^2-3c)\sum_{\alpha>4}|\bar A_{\alpha}|^2\\ \\
=(\frac{3}{2}\sup_{\Sigma^3}|\mathcal A|^2+7c|\bar T|^2-3c)\sup_{\Sigma^3}|\mathcal A|^2,
\end{array}
\end{equation}
respectively. From \eqref{estimate3lim} and Lemma \ref{lili}, it
follows that either
\begin{enumerate}
\item  $\bar A_{5}=\ldots=\bar A_{n+1}=0$$;$ or

\item only two matrices $\bar A_{\alpha_0}$ and $\bar A_{\beta_0}$ are different from the null $m\times m$ matrix,
$|\bar A_{\alpha_0}|^2=|\bar A_{\beta_0}|^2=L$, and there exists
an orthogonal matrix $T$ such that
\begin{equation}\label{eq:matrix}
T^t\bar A_{\alpha_0}T=\sqrt{\frac{L}{2}}\left(\begin{array}{ccc}1&0&0\\0&-1&0\\0&0&0\end{array}\right),
\quad T^t\bar A_{\beta_0}T=\sqrt{\frac{L}{2}}\left(\begin{array}{ccc}0&1&0\\1&0&0\\0&0&0\end{array}\right).
\end{equation}
\end{enumerate}

In the first case, one obtains
$$
0=\sum_{\alpha>4}|\bar A_{\alpha}|^2=\sup_{\Sigma^3}|\mathcal A|^2,
$$
which means that $|\mathcal A|^2=0$ or, equivalently, that
$A_{\alpha}=0$ for all $\alpha>4$.

In the following, we shall assume that the second case occurs, and
we will come to a contradiction.

Restricting \eqref{eq:intermediar_e4} to the sequence of points
$\{p_{k_r}\}_{k_r\in\mathbb{N}}$ and then taking the limit, we get, also using
\eqref{estimate4lim}, that
$$
\sum_{i,j=1}^3\bar\lambda_i\bar\lambda_j(\bar\lambda_i^{\alpha}-\bar\lambda_j^{\alpha})^2
=\Big(\frac{1}{2}\sup_{\Sigma^3}|\mathcal{A}|^2+\frac{7c}{3}|\bar T|^2-c\Big)\sum_{i,j=1}^3(\bar\lambda_i^{\alpha}-\bar\lambda_j^{\alpha})^2,
$$
where $\bar\lambda_i=\lim_{k_r\rightarrow\infty}\lambda_i$ and
$\bar\lambda_i^{\alpha}=\lim_{k_r\rightarrow\infty}\lambda_i^{\alpha}$.
From \eqref{eq:matrix} we have
$\bar\lambda_i^{\alpha}\neq\bar\lambda_j^{\alpha}$ for $i\neq j$,
and then, from \eqref{eq:ij}, one obtains
\begin{equation}\label{eq:ijlim}
\bar\lambda_i\bar\lambda_j=\frac{1}{2}\sup_{\Sigma^3}|\mathcal{A}|^2+\frac{7c}{3}|\bar
T|^2-c \quad\textnormal{for}\quad i\neq j.
\end{equation}

Now, on the one hand, taking the limit in \eqref{eq:chen0} and
applying Lemma \ref{l:chen}, we get
$$
\Big(\sum_{i=1}^3\bar\lambda_i\Big)^2=2\sum_{i=1}^3(\bar\lambda_i)^2
+2\Big(\sup_{\Sigma^3}|\mathcal{A}|^2+\frac{14c}{3}|\bar T|^2-2c\Big),
$$
or, equivalently,
\begin{equation}\label{eq:final1}
\frac{3}{2}|H|^2=|\bar\phi_{4}|^2+\sup_{\Sigma^3}|\mathcal{A}|^2+\frac{14c}{3}|\bar T|^2-2c,
\end{equation}
where $\phi_4=A_4-|H|\id$ is the traceless part of $A_4$ and
$\bar\phi_4=\lim_{k_{r}\rightarrow\infty}\phi_4(p_{k_r})$.

On the other hand, we have
$$
\sum_{i\neq j}\lambda_i\lambda_j=\Big(\sum_{i=1}^3\lambda_i\Big)^2-\sum_{i=1}^3(\lambda_i)^2
=9|H|^2-(|\phi_4|^2+3|H|^2)=6|H|^2-|\phi_4|^2,
$$
which, tacking the limit and using \eqref{eq:ijlim}, gives
\begin{equation}\label{eq:final2}
|\bar\phi_4|^2=6|H|^2-3\sup_{\Sigma^3}|\mathcal{A}|^2-14c|\bar T|^2+6c.
\end{equation}

Summarizing, from \eqref{eq:final1} and \eqref{eq:final2}, one
obtains
$$
|\bar\phi_4|^2=-\frac{3}{4}|H|^2,
$$
which is a contradiction and, therefore, this case cannot occur.

We have just proved that our submanifold $\Sigma^m$ actually is a
cmc hypersurface in $M^{m+1}(c)$ for any $m\geq 3$.

Now, from \eqref{cond_thm}, it is easy to see that
$$
|\phi|^2\leq 2c+\frac{m}{m-1}|H|^2<r^2,
$$
where $\phi$ is the traceless part of $\sigma$ and $r$ is the
positive root of the polynomial
$$
P(t)=t^2+\frac{m(m-2)}{\sqrt{m(m-1)}}|H|t-m(c+|H|^2).
$$
We then use \cite[Theorem~1.5]{AdC} (see also \cite{WS}) to conclude
that $\phi=0$, i.e. $\Sigma^m$ is a totally umbilical cmc
hypersurface in $M^{m+1}(c)$.
\end{proof}

\begin{theorem}\label{thm:pmc_red_2} Let $\Sigma^m$ be a complete non-minimal pmc submanifold in
$M^n(c)\times\mathbb{R}$, $n>m\geq 3$, $c<0$, with mean curvature vector field $H$ and second fundamental form $\sigma$. If $H$ is orthogonal to $\xi$  and
\begin{equation}\label{cond_thm_2}
|\sigma|^2+\frac{2c(m+1)}{m}|T|^2\leq 4c+\frac{m^2}{m-1}|H|^2,
\end{equation}
then $\Sigma^m$ is a totally umbilical cmc hypersurface in
$M^{m+1}(c)$.
\end{theorem}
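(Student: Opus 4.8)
The plan is to mirror the proof of Theorem \ref{thm:pmc_red_1} almost line by line, adjusting the estimates to the hypothesis $c<0$ and $\langle H,N\rangle=0$. First I would fix a local orthonormal frame $\{E_{m+1},\ldots,E_{n+1}\}$ in the normal bundle with $E_{m+1}=H/|H|$ and set $|\mathcal{A}|^2=\sum_{\alpha>m+1}|A_{\alpha}|^2$. Combining formula \eqref{eq:delta_H} with Proposition \ref{p:delta} yields exactly equation \eqref{eq:A} for $\frac12\Delta|\mathcal{A}|^2$. Now the sign of $c$ changes which way the Schwarz inequality $|A_{\alpha}T|^2\le|T|^2|A_{\alpha}|^2$ must be used: since $c<0$, the term $-2cm\sum_{\alpha>m+1}|A_{\alpha}T|^2$ is nonnegative and must instead be bounded below by $-2cm|T|^2|\mathcal{A}|^2$ read in the favorable direction (i.e. we keep $-2cm\sum|A_\alpha T|^2\ge -2cm|T|^2|\mathcal A|^2$ is false for $c<0$; rather we use $\sum|A_\alpha T|^2\le|T|^2|\mathcal A|^2$ so that $-2cm\sum|A_\alpha T|^2\ge 2cm|T|^2|\mathcal A|^2$ since $-2cm>0$... wait). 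Let me restate: because $-2cm>0$, the bound $\sum|A_\alpha T|^2\le|T|^2|\mathcal A|^2$ gives $-2cm\sum_{\alpha>m+1}|A_\alpha T|^2\ge -2cm|T|^2|\mathcal A|^2$ — this is the reverse of \eqref{estimate1} but still usable, and it is in fact what makes the coefficient $\frac{2c(m+1)}{m}$ (rather than $\frac{2c(2m+1)}{m}$) appear. For the term $|A_N|^2-(\trace(A_NA_{m+1}))\langle E_{m+1},N\rangle$, Lemma \ref{HN} together with $\langle H,N\rangle=0$ gives $\Delta\langle H,N\rangle=0=-\trace(A_HA_N)$, hence $\trace(A_NA_{m+1})=\trace(A_HA_N)/|H|=0$, so this whole expression equals $|A_N|^2\ge 0$; that replaces \eqref{estimate2}. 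The commutator/trace term is estimated by Lemma \ref{lili} exactly as in \eqref{estimate3}, giving $-\frac32|\mathcal{A}|^4$.

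Second, for the polynomial term $\sum_{\alpha>m+1}\{(\trace A_{m+1})(\trace(A_\alpha^2A_{m+1}))-(\trace(A_\alpha A_{m+1}))^2\}$ I would again use that $[A_{m+1},A_\alpha]=0$, diagonalize simultaneously, and rewrite it as $\frac12\sum_{i,j}\lambda_i\lambda_j(\lambda_i^\alpha-\lambda_j^\alpha)^2$ as in \eqref{eq:term}. The hypothesis \eqref{cond_thm_2} rearranges to
$$
\Big(\sum_{i=1}^m\lambda_i\Big)^2\ge(m-1)\sum_{i=1}^m(\lambda_i)^2+(m-1)\Big(|\mathcal{A}|^2+\frac{2c(m+1)}{m}|T|^2-4c\Big),
$$
so Lemma \ref{l:chen} gives $\lambda_i\lambda_j\ge\frac12|\mathcal{A}|^2+\frac{c(m+1)}{m}|T|^2-2c$ for $i\neq j$. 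Since the right-hand side is what multiplies a sum of squares, I would run the same computation as \eqref{eq:intermediar_e4}, obtaining the lower bound $(\frac m2|\mathcal A|^2+c(m+1)|T|^2-2cm)|A_\alpha|^2$, and summing over $\alpha>m+1$ the analogue of \eqref{estimate4}. Plugging the four estimates into \eqref{eq:A}, the coefficients are designed so that all the $c|T|^2$ and $c$ terms cancel against the ones coming from the curvature part, and one is left with $\frac12\Delta|\mathcal{A}|^2\ge\frac{m-3}{2}|\mathcal{A}|^4\ge 0$, exactly \eqref{eq:finalA}.

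Third, boundedness of $|\sigma|$ (from \eqref{cond_thm_2}) again gives a lower Ricci bound via the computation in the proof of Proposition \ref{thm:cmc2}, so the Omori-Yau principle (Theorem \ref{OY}) applied to $u=|\mathcal{A}|^2$ forces $(m-3)\sup_{\Sigma^m}|\mathcal{A}|^2=0$. For $m\ge 4$ this gives $|\mathcal{A}|^2\equiv 0$ and all intermediate inequalities become equalities; in particular $A_N=0$, so $|T|^2$ is constant, and Proposition \ref{pTgen} gives $c(m-1)|T|^2(1-|T|^2)=0$, i.e. $T=0$ or $T=\pm\xi$. The cylinder alternative $T=\pm\xi$ is ruled out by the traceless-part inequality $0\le|\phi_0|^2=|\sigma_0|^2-(m-1)|H_0|^2$ combined with \eqref{cond_thm_2} rewritten for $\Sigma^{m-1}$ (here the sign $c<0$ works in our favor, the extra $4c$ term being negative). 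Hence $T=0$ and the subbundle $L=\Span\{H\}$ is parallel (because $E_{m+1}$ is parallel and $A_\alpha=0$ for $\alpha>m+1$), so $T\Sigma^m\oplus L$ is parallel, orthogonal to $\xi$, and $\bar R$-invariant; \cite[Theorem~2]{ET} then places $\Sigma^m$ in a totally geodesic $M^{m+1}(c)$ as a cmc hypersurface. For $m=3$ I would repeat verbatim the subsequence/limit argument of Case II of Theorem \ref{thm:pmc_red_1}, taking $\bar A_\alpha=\lim A_\alpha(p_{k_r})$: Lemma \ref{lili} leaves the two-matrix alternative \eqref{eq:matrix}, and combining the limiting versions of \eqref{eq:chen0} and of $\sum_{i\neq j}\lambda_i\lambda_j$ leads, just as in \eqref{eq:final1}--\eqref{eq:final2}, to $|\bar\phi_4|^2$ equal to a negative multiple of $|H|^2$, a contradiction; so $|\mathcal{A}|^2\equiv 0$ here too. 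Finally, once $\Sigma^m$ is a cmc hypersurface in $M^{m+1}(c)$, the inequality \eqref{cond_thm_2} gives $|\phi|^2\le 4c+\frac{m}{m-1}|H|^2<\frac{m}{m-1}|H|^2$, which is below the relevant pinching constant, so \cite[Theorem~1.5]{AdC} forces $\phi=0$ and $\Sigma^m$ is totally umbilical.

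The main obstacle I anticipate is bookkeeping the signs: with $c<0$ several inequalities in the $c>0$ proof point the wrong way, and one must be careful to verify that the specific coefficient $\frac{2c(m+1)}{m}$ and the constant $4c$ in \eqref{cond_thm_2} are precisely what is needed for the curvature terms in \eqref{eq:A} to cancel and yield $\frac12\Delta|\mathcal{A}|^2\ge\frac{m-3}{2}|\mathcal{A}|^4$; a secondary subtlety is confirming that the hyperbolic analogue of \cite[Theorem~1.5]{AdC} (or \cite{WS}) is available in the range of $|\phi|$ produced here, since for $c<0$ the pinching threshold is governed by a different polynomial than in the spherical case.
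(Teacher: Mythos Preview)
Your overall architecture is right, but the two sign-sensitive estimates you propose for the curvature block of \eqref{eq:A} are both wrong, and without the correct ones the inequality $\frac12\Delta|\mathcal A|^2\ge\frac{m-3}{2}|\mathcal A|^4$ cannot be reached.

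First, for the term $-2cm\sum_{\alpha>m+1}|A_\alpha T|^2$: since $-2cm>0$, the Schwarz bound $\sum|A_\alpha T|^2\le|T|^2|\mathcal A|^2$ gives an \emph{upper} bound $-2cm\sum|A_\alpha T|^2\le -2cm|T|^2|\mathcal A|^2$, not the lower bound you write. The paper simply uses $-2cm\sum_{\alpha>m+1}|A_\alpha T|^2\ge 0$ and drops the term; this is the only available lower estimate here.

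Second, and more seriously, your treatment of $cm\{|A_N|^2-(\trace(A_NA_{m+1}))\langle E_{m+1},N\rangle\}$ fails. You correctly observe (even more directly than via Lemma~\ref{HN}: just use $\langle E_{m+1},N\rangle=0$) that the cross term vanishes, leaving $cm|A_N|^2$. But $cm<0$, so ``$|A_N|^2\ge 0$'' yields $cm|A_N|^2\le 0$, which is useless as a lower bound. What is needed is an \emph{upper} bound on $|A_N|^2$. The paper obtains it by exploiting $H\perp\xi$ a second time: since $\langle N,E_{m+1}\rangle=0$, one has $A_N=\sum_{\alpha>m+1}\langle N,E_\alpha\rangle A_\alpha$, and Cauchy--Schwarz gives $|A_N|^2\le|N|^2|\mathcal A|^2=(1-|T|^2)|\mathcal A|^2$, hence $cm|A_N|^2\ge cm(1-|T|^2)|\mathcal A|^2$. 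Combining this with $c(m-|T|^2)|\mathcal A|^2$ from \eqref{eq:A} produces $c(2m-(m+1)|T|^2)|\mathcal A|^2$, which is exactly what cancels against the Chen term $(\frac m2|\mathcal A|^2+c(m+1)|T|^2-2cm)|\mathcal A|^2$ and the Li--Li term $-\frac32|\mathcal A|^4$. Your attribution of the coefficient $\frac{2c(m+1)}{m}$ to the $A_\alpha T$ term is therefore incorrect; it comes from the $|A_N|^2$ bound.

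A final remark: once $\Sigma^m$ is a cmc hypersurface in $M^{m+1}(c)$ with $c<0$, \cite[Theorem~1.5]{AdC} is not the right reference (it is stated for spheres). The paper invokes \cite[Theorem~5]{AGM}, which gives the pinching $|\phi|^2<r^2$ implies $\phi=0$ in hyperbolic space forms; you already flagged this, and it is indeed the fix.
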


\begin{proof} Let us consider a local orthonormal frame field $\{E_{m+1},\ldots,E_{n+1}\}$ in the normal bundle such that $E_{m+1}=\frac{H}{|H|}$. Then, since $H\perp\xi$, we have
$$
A_N=\sum_{\alpha>m+1}\langle N,E_{\alpha}\rangle A_{\alpha}
$$
and, therefore, from the Schwarz inequality, one obtains
$$
\begin{array}{ll}
|A_N|^2&=\Big|\sum_{\alpha>m+1}\langle N,E_{\alpha}\rangle A_{\alpha}\Big|^2\leq\Big(\sum_{\alpha>m+1}|\langle N,E_{\alpha}\rangle| |A_{\alpha}|\Big)^2\\ \\&\leq\Big(\sum_{\alpha>m+1}|\langle N,E_{\alpha}\rangle|^2\Big)\Big(\sum_{\alpha>m+1}|A_{\alpha}|^2\Big)\leq |N|^2|\mathcal A|^2\\ \\&=(1-|T|^2)|\mathcal A|^2,
\end{array}
$$
where $|\mathcal A|^2=\sum_{\alpha>m+1}|A_{\alpha}|^2$. Then, from \eqref{eq:A},
it follows that
\begin{equation}\label{eq:AHR}
\begin{array}{lcl}
\frac{1}{2}\Delta|\mathcal{A}|^2&\geq&c(2m-(m+1)|T|^2)|\mathcal{A}|^2\\ \\&&+\sum_{\alpha>m+1}\{(\trace A_{m+1})(\trace(A_{\alpha}^2A_{m+1}))-(\trace(A_{\alpha}A_{m+1}))^2\}\\ \\&&+\sum_{\alpha,\beta>m+1}\{\trace[A_{\alpha},A_{\beta}]^2-(\trace(A_{\alpha}A_{\beta}))^2\},
\end{array}
\end{equation}
where we also used the fact that $-c\sum_{\alpha>m+1}|A_{\alpha} T|^2\geq 0$.

Next, in the same way as in the proof of Theorem \ref{thm:pmc_red_1}, we get
$$
\sum_{\alpha,\beta>m+1}\{\trace[A_{\alpha},A_{\beta}]^2-(\trace(A_{\alpha}A_{\beta}))^2\}\geq
-\frac{3}{2}|\mathcal{A}|^4
$$
and, using \eqref{cond_thm_2},
$$
\begin{array}{r}
\sum_{\alpha>m+1}\{(\trace A_{m+1})(\trace(A_{\alpha}^2A_{m+1}))-(\trace(A_{\alpha}A_{m+1}))^2\}\\ \\\geq (\frac{m}{2}|\mathcal{A}|^2+c(m+1)|T|^2-2cm)|\mathcal{A}|^2.
\end{array}
$$

Replacing in \eqref{eq:AHR}, we obtain that
$$
\frac{1}{2}\Delta|\mathcal{A}|^2\geq\frac{m-3}{2}|\mathcal{A}|^4,
$$
which, again as in the proof of Theorem \ref{thm:pmc_red_1}, implies that $|\mathcal A|^2=0$, and, therefore, $A_{\alpha}=0$ for all $\alpha>m+1$.

On the other hand, since $H\perp\xi$ implies that $A_{m+1}T=0$, and $A_N=0$ implies that $|T|=\cst$, from Proposition \ref{pTgen}, we can see that
$$
0=c(m-1)|T|^2(1-|T|^2),
$$
which means that either $T=0$ or $T=\pm\xi$. If $T=\pm\xi$, then $\Sigma^m$ is a vertical cylinder
$\pi^{-1}(\Sigma^{m-1})$ over a pmc submanifold $\Sigma^{m-1}$ in
$M^n(c)$, with second fundamental form $\sigma_0$, satisfying
$|\sigma_0|=|\sigma|$, and mean curvature vector field
$H_0=\frac{m}{m-1}H$. Then, from \eqref{cond_thm_2}, it follows that
$$
|\sigma_0|^2\leq (m-1)|H_0|^2+\frac{2c(m-1)}{m}<(m-1)|H_0|^2,
$$
which is a contradiction. Hence $T=0$ and, using \cite[Theorem~2]{ET}, this leads to the conclusion that $\Sigma^m$ is a cmc hypersurface in $M^{m+1}(c)$.

Finally, we observe that, using \eqref{cond_thm_2}, we have
$$
|\phi|^2\leq 4c+\frac{m}{m-1}|H|^2<r^2,
$$
where $\phi$ is the traceless part of $\sigma$ and $r$ is the
positive root of the polynomial
$$
P(t)=t^2+\frac{m(m-2)}{\sqrt{m(m-1)}}|H|t-m(c+|H|^2),
$$
and then, from \cite[Theorem~5]{AGM}, we get that $\phi=0$, which
means that $\Sigma^m$ is totally umbilical in $M^{m+1}(c)$.
\end{proof}

In the case of pmc surfaces, we can state the following two results.

\begin{theorem}\label{thm:surface_red1} Let $\Sigma^2$ be a complete non-minimal pmc surface in $M^n(c)\times\mathbb{R}$, $n>2$, $c>0$, such that
the angle between $H$ and $\xi$ is constant and
$$
|\sigma|^2+3c|T|^2\leq 4|H|^2+2c.
$$
Then, either
\begin{enumerate}
\item $\Sigma^2$ is pseudo-umbilical and lies in $M^n(c)$$;$ or

\item $\Sigma^2$ is a torus $\mathbb{S}^1(r)\times\mathbb{S}^1\Big(\sqrt{\frac{1}{c}-r^2}\Big)$ in $M^3(c)$, with $r^2\neq\frac{1}{2c}$.
\end{enumerate}
\end{theorem}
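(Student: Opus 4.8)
The plan is to adapt the proof of Theorem~\ref{thm:pmc_red_1} to surfaces, exploiting that on the $2$-plane $T_p\Sigma^2$ every traceless symmetric endomorphism $\phi$ obeys $\phi^2=\tfrac12|\phi|^2\,\id$. Fix a local orthonormal normal frame $\{E_3,\dots,E_{n+1}\}$ with $E_3=H/|H|$; then $\trace A_\alpha=0$ and $A_\alpha=\phi_\alpha$ is traceless for every $\alpha>3$. The identity $\phi_\alpha^2=\tfrac12|\phi_\alpha|^2\,\id$ gives $\trace(\phi_\alpha^2\phi_3)=0$, $\sum_\alpha|\phi_\alpha T|^2=\tfrac12|T|^2|\phi|^2$ and $\trace[\phi_\alpha,\phi_\beta]^2=-2\bigl(|\phi_\alpha|^2|\phi_\beta|^2-(\trace(\phi_\alpha\phi_\beta))^2\bigr)$; moreover, since the $\phi_\alpha$ with $\alpha>3$ span at most the $2$-dimensional space of traceless symmetric operators, their Gram matrix has rank $\le 2$, whence $\sum_{\alpha,\beta>3}(\trace(\phi_\alpha\phi_\beta))^2\ge\tfrac12|\mathcal A|^4$ with $|\mathcal A|^2:=\sum_{\alpha>3}|A_\alpha|^2$. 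Substituting these identities turns Proposition~\ref{p:delta} and equation \eqref{eq:A} into polynomial expressions in $|\phi_3|^2$, $|\mathcal A|^2$, $|T|^2$, $\langle\phi_3T,T\rangle$, $|\phi_N|^2$ and $\langle H,N\rangle$.

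Since the angle between $H$ and $\xi$ is constant and $|H|$ is constant, $\langle H,N\rangle$ is constant, so Lemma~\ref{HN} yields $\trace(A_HA_N)=-c|T|^2\langle H,N\rangle$. The cross term $\langle\phi_3T,T\rangle$ appears with the same coefficient in $\tfrac12\Delta|A_3|^2$ and in $\tfrac12\Delta|T|^2$ (Proposition~\ref{pTgen}), so it cancels from $\Delta(|A_3|^2-c|T|^2)$; rewriting the hypothesis as $(\lambda_1+\lambda_2)^2\ge\lambda_1^2+\lambda_2^2+|\mathcal A|^2+3c|T|^2-2c$ for the eigenvalues $\lambda_1,\lambda_2$ of $A_3$ and invoking Lemma~\ref{l:chen} gives $2\lambda_1\lambda_2\ge|\mathcal A|^2+3c|T|^2-2c$, while Lemma~\ref{lili} controls the remaining quartic term. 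Forming the appropriate linear combination $f$ of $|\mathcal A|^2$, $|\phi_3|^2$ and $|T|^2$, the hypothesis is exactly what cancels the destructive $|\mathcal A|^4$ and $|T|^2$ contributions and leaves $\tfrac12\Delta f\ge(\text{nonnegative terms})$. As $|\sigma|$ is bounded, the Ricci curvature of $\Sigma^2$ is bounded from below, as in Proposition~\ref{thm:cmc2}, so the Omori--Yau principle (Theorem~\ref{OY}) applies to $f$; passing to the limit along the resulting sequence forces all the estimates above to become equalities in the limit, in particular $\langle H,N\rangle=0$, $A_N=0$ (hence $|T|$ constant), $\nabla^{\ast}A_\alpha=0$, equality in the hypothesis, and — reading off the equality case of Lemma~\ref{lili} — either $\mathcal A\equiv0$ or $\phi_3\equiv0$.

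With $H\perp\xi$ and $A_N=0$, Proposition~\ref{pTgen} reduces to $0=c|T|^2(1-|T|^2)$; the case $|T|=1$ is excluded exactly as in Theorems~\ref{thm:pmc_red_1} and~\ref{thm:pmc_red_2} (a pmc curve in $M^n(c)$ satisfies $|\sigma_0|^2=|H_0|^2$, contradicting the resulting inequality $|\sigma_0|^2\le|H_0|^2-c$ with $c>0$), so $T=0$ and $\Sigma^2$ lies in $M^n(c)$. If $\phi_3\equiv0$ then $\Sigma^2$ is pseudo-umbilical, which is alternative~(1). Otherwise $\mathcal A\equiv0$, so $L=\Span\{\sigma\}=\Span\{H\}$ is a parallel line bundle orthogonal to $\xi$ and, by \cite[Theorem~2]{ET}, $\Sigma^2$ is a hypersurface of a totally geodesic $M^3(c)$; there the hypothesis reads $|A_3|^2\le4|H|^2+2c$, i.e.\ $\det A_3=|H|^2-\tfrac12|\phi_3|^2\ge-c$, so by the Gauss equation the Gaussian curvature satisfies $K=c+\det A_3\ge0$ everywhere. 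Thus $\Sigma^2$ is a complete non-minimal cmc surface in $\mathbb{S}^3(c)$ with $K\ge0$, hence either totally umbilical — a round sphere, which again is alternative~(1) — or a flat product torus $\mathbb{S}^1(r)\times\mathbb{S}^1\bigl(\sqrt{1/c-r^2}\bigr)$, non-minimality forcing $r^2\ne\tfrac1{2c}$, which is alternative~(2).

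The hard part is the second paragraph: choosing the precise combination $f$ and the precise signs so that the hypothesis annihilates the bad quartic and $|T|^2$ terms while the remainder is manifestly nonnegative, and then extracting from the equality case (including that of Lemma~\ref{lili}) the dichotomy $\mathcal A\equiv0$ or $\phi_3\equiv0$. The bookkeeping with $\langle\phi_3T,T\rangle$ — which must be made to cancel by adding a multiple of $|T|^2$, since the same term occurs in $\Delta|A_3|^2$ and in $\Delta|T|^2$ — and with the terms carrying the constant $\langle H,N\rangle$ and the nonnegative term $|\phi_N|^2$, is where the real computational care lies.
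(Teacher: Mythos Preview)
Your plan has two genuine gaps. First, the function $f$ is never constructed: you assert that ``the hypothesis is exactly what cancels the destructive $|\mathcal A|^4$ and $|T|^2$ contributions,'' but that cancellation is the heart of the argument and is left as a promissory note. Second, and more seriously, your use of Omori--Yau is logically incomplete. Theorem~\ref{OY} produces a sequence $\{p_k\}$ along which $\Delta f(p_k)\to 0$, which forces your nonnegative pieces to vanish \emph{in the limit along $\{p_k\}$}, not globally. From this alone you cannot conclude $A_N\equiv 0$, $|T|$ constant, $\nabla^{\ast}A_\alpha\equiv 0$, or the dichotomy ``$\mathcal A\equiv 0$ or $\phi_3\equiv 0$'' on all of $\Sigma^2$. (Note also that Lemma~\ref{lili} is applied only to the $A_\alpha$ with $\alpha>3$; its equality case says nothing about $\phi_3$, so the claimed dichotomy does not come from there.)

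The paper takes a different and cleaner route that sidesteps both issues. It first disposes of the pseudo-umbilical case via analyticity of $p\mapsto (A_H-\mu\,\id)(p)$ together with \cite[Theorem~1]{AdCT}, which already gives alternative~(1). In the non-pseudo-umbilical case, \cite[Lemma~1]{AdCT} furnishes, on an open dense set, a frame diagonalizing every $A_U$; hence $[A_\alpha,A_\beta]=0$, the quartic estimate sharpens to $-\tfrac12|\mathcal A|^4$, and one obtains $\Delta|\mathcal A|^2\ge\tfrac12|\mathcal A|^4$ directly for the single function $|\mathcal A|^2$---no linear combination is needed. The key idea you are missing is that the hypothesis itself forces
\[
2K=2c(1-|T|^2)+4|H|^2-|\sigma|^2\ge c|T|^2\ge 0,
\]
so $\Sigma^2$ is parabolic; a bounded subharmonic function is then constant, whence $|\mathcal A|^2\equiv 0$ globally. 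From there one argues $T=0$ exactly as in Theorem~\ref{thm:pmc_red_1}, reduces to $M^3(c)$ via \cite[Theorem~2]{ET}, and finishes with the classification of \cite{DAH} (see also \cite[Theorem~1.5]{AdC}). Replacing Omori--Yau by parabolicity is what turns the limiting information into global information and makes the argument go through.
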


\begin{proof} The map
$p\in\Sigma^2\rightarrow(A_H-\mu\id)(p)$, where $\mu$ is a constant, is analytic, and,
therefore, either $\Sigma^2$ is a pseudo-umbilical surface (at every
point), or $H$ is an umbilical direction on a closed set without interior
points. In the second case, $H$ is not an umbilical direction on an open dense set $W$. We shall work on this set and then we shall extend the results to the whole surface by continuity.

If $\Sigma^2$ is a pmc surface in $M^n(c)\times\mathbb{R}$, then either $\Sigma^2$ is
pseudo-umbilical, i.e. $H$ is an umbilical direction everywhere, or, at any point in $W$, there exists a local orthonormal frame field that diagonalizes $A_U$ for
any normal vector field $U$ defined on $W$ (see \cite[Lemma 1]{AdCT}). According to \cite[Theorem~1]{AdCT}, if $\Sigma^2$ is a pseudo-umbilical pmc surface in $\mathbb{S}^n(c)\times\mathbb{R}$, then it lies in $M^n(c)$, and if the surface is not pseudo-umbilical, then it lies in $M^4(c)\times\mathbb{R}$.

In the following, we shall assume that $\Sigma^2$ is not pseudo-umbilical and we shall prove that, in this case, it is a torus in $M^3(c)$.

First, let $\{E_3=\frac{H}{|H|},E_4,E_5\}$ be a local orthonormal frame field in the normal bundle, and then observe that $[A_{\alpha},A_{\beta}]=0$ for all $\alpha$ and $\beta$. Moreover, diagonalizing simultaneously $A_4$ and $A_5$, we easily get
$$
(\trace(A_4A_5))^2=2|A_4|^2|A_5|^2\leq\frac{1}{2}(|A_4|^2+|A_5|^2)^2=\frac{1}{2}|\mathcal A|^4,
$$
which means that
\begin{equation}\label{estimate_surface}
\trace[A_4,A_5]^2-(\trace(A_4A_5))^2=-2|A_4|^2|A_5|^2\geq-\frac{1}{2}|\mathcal A|^4.
\end{equation}

Now, taking into account that
$$
|A_{\alpha}T|^2=\frac{1}{2}|T|^2|A_{\alpha}|^2
$$
for $\alpha\in\{4,5\}$, since $\trace A_{\alpha}=0$, and then working exactly as in the proof of Theorem \ref{thm:pmc_red_1}, we obtain
$$
\Delta|\mathcal A|^2\geq\frac{1}{2}|\mathcal A|^4\geq 0.
$$

By hypothesis, we have that the Gaussian curvature $K$ of our surface satisfies
$$
0=2K=2c(1-|T|^2)+4|H|^2-|\sigma|^2\geq c|T|^2\geq 0,
$$
which means that $\Sigma^2$ is a parabolic space. Therefore, since $|\mathcal A|^2$ is a bounded subharmonic function, we get that $|\mathcal A|^2=0$, i.e. $A_4=A_5=0$. Moreover, using Proposition \ref{pTgen}, we can see that either $T=0$ or $T=\pm\xi$. Again as in Theorem \ref{thm:pmc_red_1} we discard the second case and we conclude that $\Sigma^2$ lies in $M^3(c)$ by using \cite[Theorem~2]{ET}.

Finally, since $\Sigma^2$ is not pseudo-umbilical, from a result in \cite{DAH} (see also \cite[Theorem~1.5]{AdC}), we obtain that $|\sigma|^2=4|H|^2+2c$ and that our surface is the torus $\mathbb{S}^1(r)\times\mathbb{S}^1\Big(\sqrt{\frac{1}{c}-r^2}\Big)$, with $r^2\neq\frac{1}{2c}$.
\end{proof}

\begin{theorem}\label{thm:surface_red2} Let $\Sigma^2$ be a complete non-minimal pmc surface in $M^n(c)\times\mathbb{R}$, $n>2$, $c<0$, such that
$H$ is orthogonal to $\xi$ and
$$
|\sigma|^2+5c|T|^2\leq 4|H|^2+4c.
$$
Then $\Sigma^2$ is pseudo-umbilical and lies in $M^n(c)$.
\end{theorem}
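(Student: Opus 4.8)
The plan is to mimic the proofs of Theorems~\ref{thm:surface_red1} and \ref{thm:pmc_red_2}: reduce the codimension via a Simons type inequality and then apply a pinching theorem for cmc surfaces. Since $p\mapsto(A_H-\mu\,\id)(p)$ is analytic for every constant $\mu$, either $\Sigma^2$ is pseudo-umbilical at every point, or it is non-pseudo-umbilical on an open dense set $W$, where, by \cite[Lemma~1]{AdCT}, some local orthonormal frame diagonalizes all the shape operators simultaneously. By \cite[Theorem~1]{AdCT}, a pseudo-umbilical pmc surface in $\mathbb{H}^n(c)\times\mathbb{R}$ already lies in $M^n(c)$ — the asserted conclusion — while a non-pseudo-umbilical one lies in $M^4(c)\times\mathbb{R}$. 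So it suffices to exclude the latter case.

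Assume then that $\Sigma^2$ is non-pseudo-umbilical, fix $\{E_3=\tfrac{H}{|H|},E_4,E_5\}$ in the normal bundle, and set $|\mathcal{A}|^2=|A_4|^2+|A_5|^2=|\sigma|^2-|A_H|^2/|H|^2$, a globally defined smooth function since $|H|=\cst\neq0$. Because $H\perp\xi$ we have $\langle E_3,N\rangle=0$, $A_3T=0$, $A_N=\sum_{\alpha>3}\langle N,E_\alpha\rangle A_\alpha$ (so $|A_N|^2\le(1-|T|^2)|\mathcal{A}|^2$) and, since $\trace A_\alpha=0$ for $\alpha>3$ forces $|A_\alpha X|^2=\tfrac12|A_\alpha|^2$ on unit vectors, $\sum_{\alpha>3}|A_\alpha T|^2=\tfrac12|T|^2|\mathcal{A}|^2$. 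Substituting these into \eqref{eq:A} with $m=2$ (as in the proof of Theorem~\ref{thm:pmc_red_2}, but now retaining the $|A_\alpha T|^2$ term) yields on $W$
$$
\begin{array}{rl}
\tfrac12\Delta|\mathcal{A}|^2\ge&c(4-5|T|^2)|\mathcal{A}|^2+\sum_{\alpha>3}\big\{(\trace A_3)(\trace(A_\alpha^2A_3))-(\trace(A_\alpha A_3))^2\big\}\\ \\
&+\sum_{\alpha,\beta>3}\big\{\trace[A_\alpha,A_\beta]^2-(\trace(A_\alpha A_\beta))^2\big\}.
\end{array}
$$
The hypothesis rewrites as $(\trace A_3)^2\ge(\lambda_1^2+\lambda_2^2)+|\mathcal{A}|^2+5c|T|^2-4c$, with $\lambda_i$ the eigenvalues of $A_3$, so Lemma~\ref{l:chen} gives $2\lambda_1\lambda_2\ge|\mathcal{A}|^2+5c|T|^2-4c$; by \eqref{eq:term} and $\trace A_\alpha=0$ the first sum is then at least $(|\mathcal{A}|^2+5c|T|^2-4c)|\mathcal{A}|^2$, and the commutator term is handled exactly as in Theorem~\ref{thm:surface_red1}. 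After these estimates all the $c$-terms cancel and $\Delta|\mathcal{A}|^2\ge0$ on $W$, hence on $\Sigma^2$ by continuity. As $|\sigma|^2$ is bounded (by the hypothesis, $|H|=\cst$ and $c<0$) and the Ricci curvature of $\Sigma^2$ is bounded from below as in Proposition~\ref{thm:cmc2}, one argues as in the proof of Theorem~\ref{thm:surface_red1} to conclude $|\mathcal{A}|^2\equiv0$, i.e. $A_\alpha=0$ for all $\alpha>3$.

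Then $A_N=0$, so $|T|$ is constant, and Proposition~\ref{pTgen} gives $0=c(m-1)|T|^2(1-|T|^2)$, whence $T=0$ or $T=\pm\xi$; the case $T=\pm\xi$ is discarded exactly as in the proof of Theorem~\ref{thm:pmc_red_2}, so $T=0$ and, by \cite[Theorem~2]{ET}, $\Sigma^2$ is a cmc surface in $M^3(c)$. With $T=0$ the hypothesis reads $|\sigma|^2\le4|H|^2+4c$, so $|\phi|^2\le2|H|^2+4c<2|H|^2+2c=r^2$, where $r$ is the positive root of $P(t)=t^2+\frac{m(m-2)}{\sqrt{m(m-1)}}|H|\,t-m(c+|H|^2)$ with $m=2$; by \cite[Theorem~5]{AGM} this forces $\phi=0$, i.e. $\Sigma^2$ is totally umbilical, hence pseudo-umbilical — contradicting our assumption. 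Thus $\Sigma^2$ is pseudo-umbilical and, by \cite[Theorem~1]{AdCT}, lies in $M^n(c)$.

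The real work should be concentrated in two spots. First, the Simons type inequality: one must run the computation of Section~\ref{section_S} (in the form \eqref{eq:A}) so that the coefficient $c(4-5|T|^2)$ of $|\mathcal{A}|^2$ is exactly absorbed by the cubic and commutator terms once Lemmas~\ref{l:chen} and \ref{lili} are invoked — this cancellation is precisely what pins down the coefficient $5$ in front of $c|T|^2$ in the hypothesis — and the extra bookkeeping caused by the two tracefree operators $A_4,A_5$ must be carried out with care. Second, finishing the reduction when $c<0$: ruling out the vertical-cylinder possibility $T=\pm\xi$ and then checking that the pinching $|\phi|^2<r^2$ genuinely excludes a non-pseudo-umbilical cmc surface in $\mathbb{H}^3(c)$ — where, unlike in $\mathbb{S}^3(c)$, there is no Clifford-type torus available — which is exactly what makes this statement a single, clean alternative, in contrast with its counterpart Theorem~\ref{thm:surface_red1}.
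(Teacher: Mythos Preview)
Your overall strategy --- assume non-pseudo-umbilical, derive a Simons-type inequality to force $|\mathcal A|^2=0$, deduce $T=0$ and hence $\Sigma^2\subset M^3(c)$, and then obtain a contradiction from a pinching theorem for cmc surfaces --- is exactly the route the paper takes. The only visible difference at the end is the citation: once $T=0$ the hypothesis gives $|\sigma|^2\le 4|H|^2+4c<4|H|^2+2c$, and the paper invokes \cite{T} for total umbilicity, whereas you use \cite{AGM}; either works.

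There is, however, a gap in your passage from the differential inequality to $|\mathcal A|^2\equiv0$. You state that after the estimates ``all the $c$-terms cancel and $\Delta|\mathcal A|^2\ge0$'', and then that ``one argues as in the proof of Theorem~\ref{thm:surface_red1}''. But that proof rests on two ingredients you do not secure here: first, it claims the \emph{strict} bound $\Delta|\mathcal A|^2\ge\tfrac12|\mathcal A|^4$, not merely $\ge0$; second, it uses parabolicity, obtained there from $K\ge0$ via the hypothesis with $c>0$. In the present $c<0$ setting the hypothesis only yields $2K\ge|c|(2-3|T|^2)$, which can be negative when $|T|^2>\tfrac23$, so parabolicity is not automatic; and Omori--Yau (which is all that ``Ricci curvature bounded from below'' buys you) applied to a bounded function that is merely subharmonic does not force it to be constant, let alone zero. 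To close this step you must either produce a genuinely positive lower bound of the form $\tfrac12\Delta|\mathcal A|^2\ge C\,|\mathcal A|^4$ with $C>0$ --- this is precisely what the paper's combination of \eqref{estimate_surface} with the scheme of Theorem~\ref{thm:pmc_red_2} is meant to deliver, and then Omori--Yau gives $\sup_{\Sigma^2}|\mathcal A|^2=0$ --- or else supply an independent argument that $\Sigma^2$ is parabolic.
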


\begin{proof} Let us assume that $\Sigma^2$ is not pseudo-umbilical. Then, from \eqref{estimate_surface}, and working as in Theorem \ref{thm:pmc_red_2} we can prove that $\Sigma^2$ lies in $M^3(c)$. On the other hand, we observe that $|\sigma|^2\leq 4|H|^2+4c<4|H|^2+2c$, and, therefore, using a result in \cite{T}, we have that the surface is totally umbilical, which is a contradiction.
\end{proof}

\end{document}